\newtheorem{thm}{Theorem}[section]
\newtheorem{prop}[thm]{Proposition}
\newtheorem{lemma}[thm]{Lemma}
\newtheorem{cor}[thm]{Corollary}
\newtheorem{example}{Example}[section]
\newtheorem{defin}{Definition}[section]
\def\q{\hfill\rule{1ex}{1ex}}
\begin{document}

\renewcommand{\baselinestretch}{1.3}

\title{\bf Simplices in $t$-intersecting families for vector spaces}

\author[1]{Haixiang Zhang\thanks{E-mail: \texttt{zhang-hx22@mails.tsinghua.edu.cn}}}
\author[2]{Mengyu Cao\thanks{Corresponding author. E-mail: \texttt{myucao@ruc.edu.cn}}}
\author[1]{Mei Lu\thanks{E-mail: \texttt{lumei@tsinghua.edu.cn}}}
\author[1]{Jiaying Song\thanks{E-mail: \texttt{jy-song21@mails.tsinghua.edu.cn}}}

\affil[1]{\small Department of Mathematical Sciences, Tsinghua University, Beijing 100084, China}
\affil[2]{\small Institute for Mathematical Sciences, Renmin University of China, Beijing 100086, China}

\date{}
\maketitle

\begin{abstract}
    Let $V$ be an $n$-dimensional vector space over the finite field $\mathbb{F}_q$ and ${V\brack k}$ denote the family of all $k$-dimensional subspaces of $V$. A family $\mathcal{F}\subseteq {V\brack k}$ is called $k$-uniform $r$-wise $t$-intersecting if for any $F_1, F_2, \dots, F_r \in \mathcal{F}$, we have $\dim\left(\bigcap_{i=1}^r F_i \right) \geq t$. An $r$-wise $t$-intersecting family $\{X_1, X_2, \dots, X_{r+1}\}$ is called a $(r+1,t)$-simplex if $\dim\left(\bigcap_{i=1}^{r+1} X_i \right) < t$, denoted by $\Delta_{r+1,t}$. Notice that it is usually called triangle when $r=2$ and $t=1$. For $k \geq t \geq 1$, $r \geq 2$ and $n \geq 3kr^2 + 3krt$, we prove that the maximal number of $\Delta_{r+1,t}$ in a $k$-uniform $r$-wise $t$-intersecting subspace family of $V$ is at most $n_{t+r,k}$, and we describe all the extreme families. Furthermore, we have the extremal structure of $k$-uniform intersecting families maximizing the number of triangles for $n\geq 2k+9$ as a corollary.

    \medskip
    \noindent \textit{MSC 2020: 05D05, 05A30}

    \noindent \textit{Key words:} $r$-wise $t$-intersecting family, simplex, triangle, vector space
\end{abstract}

\section{Introduction}
Throughout this paper, we fix three integers $r$, $t$ and $k$, where $r \geq 2$, $t \geq 1$ and $t \leq k - r$. Let $V$ be an $n$-dimensional vector space over the finite field $\mathbb{F}_q$ (with $q$ being a prime power), and let ${V \brack k}$ denote the set of all $k$-dimensional subspaces of $V$. From now on, we will abbreviate ``$k$-dimensional subspace" as ``$k$-subspace". The function \(\text{span}(V_1,V_2,\ldots,V_s,e_1,e_2,\ldots,e_{\ell})\) denotes the minimal subspaces of $V$ contains all the subspaces \(V_1,V_2,\ldots,V_s\) and vectors \(e_1,e_2,\ldots,e_{\ell}\). Sometimes we also abbreviate $\text{span}(\cdot )$ as $<\cdot >$. Recall that for any positive integers $a$ and $b$, the \emph{Gaussian binomial coefficient} is defined by
$
{a \brack b} = \prod_{0 \leq i < b} \frac{q^{a-i} - 1}{q^{b-i} - 1}.
$
Additionally, we set ${a \brack 0} = 1$, and ${a \brack c} = 0$ if $c$ is a negative integer. It is also worth noting that the size of ${V \brack k}$ is equal to ${n \brack k}$.

A family $\mathcal{F} \subseteq {V \brack k}$ is called \emph{$k$-uniform $r$-wise $t$-intersecting} if for any $F_1, F_2, \dots, F_r \in \mathcal{F}$, we have $\dim\left( \bigcap_{i=1}^r F_i \right) \geq t$. The family is assumed to be $k$-uniform throughout this paper, unless stated otherwise. Note that the term ``$2$-wise $t$-intersecting" refers to the classical concept of a ``$t$-intersecting" family. For convenience, we will simply refer to a $1$-intersecting family as an \emph{intersecting family}. A $r$-wise $t$-intersecting family $\mathcal{F}$ is \emph{trivial} if there exists $X \in {V \brack t}$ such that $X \subseteq F$ for all $F \in \mathcal{F}$, and \emph{non-trivial} otherwise. A $r$-wise $t$-intersecting family $\mathcal{F}$ is \emph{maximal} if adding another $k$-subspace to $\mathcal{F}$ would violate the $r$-wise $t$-intersecting property. A family $\mathcal{F}_1, \mathcal{F}_2, \dots, \mathcal{F}_m$ is called \emph{$m$-cross $s$-intersecting} if for any $F_i \in \mathcal{F}_i$ ($1 \leq i \leq m$), we have
$
\dim \left( \bigcap_{i=1}^m F_i \right) \geq s.
$

The study of uniform intersection families has a long history, with increasing interest in recent years, particularly following the Erd\H{o}s-Ko-Rado (EKR) theorem \cite{Erdos-Ko-Rado-1961-313}, which determined the maximum size of an $t$-intersecting family of ${[n]\choose k}$ for $n>n_0(k,t)$. It is known that the smallest possible such function $n_0(k, t)$ is $(t +1)(k-t +1).$ This was partially proved by Frankl \cite{Frankl-1978} and subsequently determined by Wilson \cite{Wilson-1984}. In \cite{Frankl-1978}, Frankl also made a conjecture on the maximum size of a $t$-intersecting family of ${[n]\choose k}$ for any positive integers $t,k$ and $n$. This conjecture was partially proved by Frankl and F\"{u}redi   \cite{Frankl--Furedi-1991} and completely settled by Ahlswede and Khachatrian  \cite{Ahlswede-Khachatrian-1997}. The EKR theorem has since been generalized in many directions. For example, the structure of $t$-intersecting families of ${V \brack k}$ with maximum size has been fully determined \cite{Deza-Frankl-1983,PL,PR,Hsieh-1975-1,WN,Tanaka-2006-903}, which are known as the Erd\H{o}s-Ko-Rado theorem for vector spaces. The problem of maximizing the size of $r$-wise $t$-intersecting families for vector spaces has also been well-studied. In \cite{Chowdhury-2010}, Chowdhury and Patk\'{o}s used shadows in vector spaces to determine the structure of extremal $r$-wise $t$-intersecting families of ${V \brack k}$. For other works on $r$-wise $t$-intersecting families, we refer to \cite{cao2023r,O-V-2021,Xiao-2018}.

In a $k$-uniform intersecting family $\mathcal{F} \subseteq {V \brack k}$, a triple $\{F_1, F_2, F_3\}$ in $\mathcal{F}$ is called a \emph{triangle} if $\dim(F_1 \cap F_2 \cap F_3) = 0$. Moreover,  an $r$-wise $t$-intersecting family $\{F_1, F_2, \dots, F_{r+1}\}$ is called a \emph{$(r+1,t)$-simplex} if $\dim\left(\bigcap_{i=1}^{r+1} F_i \right) < t$, denoted by $\Delta_{r+1,t}$. The number of $(r+1)$-subsets of $\mathcal{F}$ that form a $\Delta_{r+1,t}$ is denoted by $\mathcal{N}(\Delta_{r+1,t}, \mathcal{F})$.

In \cite{NP2022}, Nagy and Patk\'{o}s defined the triangle for finite sets. A collection $\mathcal{T}$ of $(r+1)$ sets as an $(r+1)$-\emph{triangle} if for every $T_1, T_2, \dots, T_r \in \mathcal{T}$, we have $T_1 \cap \cdots \cap T_r \neq \emptyset$, but $\bigcap_{T \in \mathcal{T}} T = \emptyset$. They discussed the structure of the $r$-wise intersecting family $\mathcal{F}$ that contains the maximum number of $(r+1)$-triangles. Liao, Cao, and Lu \cite{Liao-2023} generalized the definition of the $(r+1)$-triangle to the $(r+1,t)$-triangle for $r$-wise $t$-intersecting families and determined the extremal structure containing the maximum number of $(r+1,t)$-triangles. Note that $r$-triangle is also referred to as $r$-simplex in \cite{FT2016}, and we use this definition in this paper. Our result can be viewed as a vector space version of the result in \cite{Liao-2023}.

The main tool used in this paper is the \emph{$t$-covering number}. For any $\mathcal{F} \subseteq {V \brack k}$ (not necessarily $t$-intersecting), a subspace $T$ of $V$ is called a \emph{$t$-cover} of $\mathcal{F}$ if $\dim(T \cap F) \geq t$ for all $F \in \mathcal{F}$, and the \emph{$t$-covering number} $\tau_t(\mathcal{F})$ is the minimum dimension of a $t$-cover. It is clear that a $t$-intersecting family $\mathcal{F}$ is trivial if and only if $\tau_t(\mathcal{F}) = t$. Denote by $\mathcal{C}_t(\mathcal{F})$ the family of all $t$-covers of $\mathcal{F}$ with size $\tau_t(\mathcal{F})$. The $t$-covering number is a powerful tool for studying intersecting families. In \cite{AB}, using the $1$-covering number, Blokhuis, Brouwer, Chowdhury, Frankl, Mussche, Patk\'{o}s and Sz\H{o}nyi obtained a vector space version of the Hilton-Milner theorem. In \cite{Cao-vec}, the authors used the $t$-covering number to describe the structure of maximal non-trivial $t$-intersecting families of ${V \brack k}$ with large size. In \cite{Liao-2023} and \cite{NP2022}, the $t$-covering number was used to study the triangles in $r$-wise $t$-intersecting families.

Define the following families:
\[
\mathcal{F}_{X,k} = \left\{ F \in {V \brack k} : \dim(F \cap X) \geq \dim(X) - 1 \right\},
\]
\[
\mathcal{F}^*_{X,k} = \left\{ F \in {V \brack k} : \dim(F \cap X) = \dim(X) - 1 \right\}.
\]
We also define
\[
n_{t+r,k} = \mathcal{N}(\Delta_{r+1,t}, \mathcal{F}_{X,k}),
\]
where $X$ is a $(t+r)$-subspace of $V$.

The main results of this paper are as follows.
\begin{thm}\label{th}
    Let $\mathcal{F} \subseteq {V \brack k}$ be an $r$-wise $t$-intersecting family with $t \geq 1$, $r \geq 2$, $k \geq r+t-1$ and $n \geq 3kr^2 + 3krt$. Then
    \[
    \mathcal{N}(\Delta_{r+1,t}, \mathcal{F}) \leq n_{t+r,k},
    \]
    with equality holding if and only if $\mathcal{F}^*_{X,k} \subseteq \mathcal{F} \subseteq \mathcal{F}_{X,k}$ for some $(r+t)$-subspace $X$ of $V$.
\end{thm}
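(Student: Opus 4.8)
The plan is to follow the now-standard $t$-covering-number method, adapted to vector spaces and to the counting (rather than size-maximizing) setting. The key principle is that a family achieving many simplices cannot have a large $t$-covering number, because spreading out the "kernel" structure too much forces the members to be too numerous relative to how many simplices they can form. So I would stratify the argument by the value of $\tau_t(\mathcal{F})$.

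**Step 1: Reduce to maximal families and bound $\tau_t(\mathcal{F})$.**

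First I would observe that we may assume $\mathcal{F}$ is a maximal $r$-wise $t$-intersecting family, since enlarging $\mathcal{F}$ only increases $\mathcal{N}(\Delta_{r+1,t},\mathcal{F})$; in particular the extremal $\mathcal{F}$ is sandwiched $\mathcal{F}^*_{X,k}\subseteq\mathcal{F}\subseteq\mathcal{F}_{X,k}$ precisely when it is maximal with $\tau_t=t+1$ and the "$(t+r)$-subspace" structure is present. The crucial preliminary estimate is an upper bound on $\tau_t(\mathcal{F})$ in terms of $r$, $t$, $k$: I expect something like $\tau_t(\mathcal{F})\le rt$ or a similar linear-in-$r$ bound, proved by the usual greedy/sunflower-free argument (pick $F_1$; it is a $t$-cover on the sub-family it $t$-meets, iterate). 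The hypothesis $n\ge 3kr^2+3krt$ is exactly the room needed to make the crude counting in later steps dominate. I would also record the trivial but essential fact that $\tau_t(\mathcal{F})\ge t+1$ whenever $\mathcal{F}$ contains even one simplex, so the interesting range is $t+1\le\tau_t(\mathcal{F})\le O(rt)$.

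**Step 2: Upper bound $\mathcal{N}(\Delta_{r+1,t},\mathcal{F})$ in terms of $\tau_t(\mathcal{F})$, and isolate the $\tau_t=t+1$ case.**

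Here is the heart of the matter. For each fixed $\ell=\tau_t(\mathcal{F})$ I would bound the number of simplices from above by a function $g(\ell)$ and the target value $n_{t+r,k}$ from below, then show $g(\ell)<n_{t+r,k}$ for all $\ell\ge t+2$. To bound $g(\ell)$: fix a minimum $t$-cover $T\in\mathcal{C}_t(\mathcal{F})$, $\dim T=\ell$; every $F\in\mathcal{F}$ meets $T$ in dimension $\ge t$, so $\mathcal{F}$ is partitioned by the (finitely many, at most ${\ell\brack t}q^{O(\cdot)}$) possible subspaces $F\cap T$, and for each such intersection pattern the number of completions $F$ is at most ${n-t\brack k-t}$. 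A simplex $\{F_1,\dots,F_{r+1}\}$ forces $\dim(\bigcap F_i\cap T)<t$ while all $r$-wise intersections inside $T$ have dimension $\ge t$; this is a strong constraint that limits the number of usable patterns. Counting simplices as (choice of "defect" configuration inside $T$) $\times$ (completions), one gets $g(\ell)=O\!\big({n-t\brack k-t}^{r}\cdot(\text{poly in }q^{k},q^{\ell})\big)$, whereas $n_{t+r,k}=\mathcal{N}(\Delta_{r+1,t},\mathcal{F}_{X,k})$ is of order ${n-t-r\brack k-t-r}^{r}$ times a positive constant — roughly a factor $q^{r^2 k}$ larger. Making the $n\ge 3kr^2+3krt$ threshold do its job, $g(\ell)<n_{t+r,k}$ for every $\ell\ge t+2$.

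**Step 3: The equality analysis when $\tau_t(\mathcal{F})=t+1$.**

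Now $\mathcal{F}$ has a $(t+1)$-dimensional $t$-cover; I would argue that in fact $\mathcal{F}$ has an $(r+t)$-dimensional subspace $X$ with $\mathcal{F}\subseteq\mathcal{F}_{X,k}$. The idea: among all $F\in\mathcal{F}$, look at $\bigcap_{F\in\mathcal{F}}$-type kernels and at the $(t+1)$-covers; using the $r$-wise $t$-intersecting property together with $\tau_t=t+1$, one shows the union of the "exceptional directions" spans a space of dimension at most $r+t$, and maximality forces $\mathcal{F}\supseteq\mathcal{F}^*_{X,k}$. Then a direct computation of $\mathcal{N}(\Delta_{r+1,t},\mathcal{F})$ for $\mathcal{F}^*_{X,k}\subseteq\mathcal{F}\subseteq\mathcal{F}_{X,k}$ shows it is constant, equal to $n_{t+r,k}$, completing both the bound and the characterization.

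**Main obstacle.**

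The delicate point is Step 2: obtaining an upper bound on $g(\ell)$ that is genuinely smaller than $n_{t+r,k}$ for $\ell$ just slightly above $t+1$ (e.g. $\ell=t+2$), since there the "loss" from a larger $t$-cover is only one dimension and the crude completion count ${n-t\brack k-t}$ must be replaced by a sharper count reflecting the extra intersection constraints a simplex imposes inside $T$. I expect this requires a careful case split on the intersection patterns $F_i\cap T$ that can occur in a simplex, plus the Gaussian-binomial inequalities ${a\brack b}\le q^{b(a-b)}\cdot C_q$ and ${a\brack b}\ge q^{b(a-b)}$, with the hypothesis on $n$ chosen exactly to absorb the constants $C_q$ and the polynomial-in-$q^k$ overheads. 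The vector-space analogue of Step 1's covering bound and the structural Step 3 should be routine adaptations of the set case in \cite{Liao-2023}, modulo bookkeeping with $q$.
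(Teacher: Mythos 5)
Your overall architecture --- stratify by $\tau_t(\mathcal{F})$, dispose of $\tau_t\ge t+2$ by a counting argument, then do a structural analysis when $\tau_t=t+1$ --- is the same as the paper's. But there are two genuine gaps. The first is in your Step 2, which you yourself flag as the main obstacle but for which you supply no mechanism. Your ``pattern $\times$ completion'' count gives at best $\mathcal{N}(\Delta_{r+1,t},\mathcal{F})\lesssim {n-t\brack k-t}^{r}\cdot(\text{poly in }q^k,q^\ell)$, while $n_{t+r,k}\asymp {n-r-t\brack k-r-t+1}^{r+1}$ (your stated order, with exponent $r$ and index $k-t-r$, is off, and the claimed factor of $q^{r^2k}$ points the wrong way). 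Comparing the exponents of $q$ that multiply $n$, your bound beats $n_{t+r,k}$ only if $r(k-t)<(r+1)(k-r-t+1)$, i.e.\ $k-t>r^2-1$, which the hypothesis $k\ge r+t-1$ does not give; so the crude completion count ${n-t\brack k-t}$ does not close for large $n$. The paper's way out is Lemma \ref{rt}: a covering number $\tau_t(\mathcal{F})\ge t+2$ forces $\mathcal{F}$ to be $(2r+t-4)$-intersecting, and feeding this into the cross-intersecting size bound of Lemma \ref{cross} replaces each completion factor by roughly ${n-k-2\brack k-2r-t+2}$, which is what makes $|\mathcal{F}|^{r+1}<n_{t+r,k}$ work. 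This ``large covering number implies much larger pairwise intersection'' lemma is the missing idea; without it, or some substitute sharpening of the completion count, your Step 2 fails.

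The second gap is Step 3, which you dismiss as a routine adaptation but which is where most of the paper's work lives. Knowing $\tau_t(\mathcal{F})=t+1$ does not by itself localize the $(t+1)$-dimensional $t$-covers inside a single $(r+t)$-subspace: the paper introduces cover-complete subspaces, proves that the span of two $(t+1)$-covers meeting in dimension $t$ is again cover-complete unless the simplex count is already below $n_{t+r,k}$ (Lemma \ref{P3}, a delicate case analysis on $\dim(Z\cap X\cap Y)$ involving explicit linear combinations), propagates this to grow cover-complete subspaces (Lemma \ref{clique}), and then shows by two further counting arguments (Lemmas \ref{ntbig} and \ref{ntsmall}) that a maximal cover-complete subspace of dimension different from $r+t$ again forces $\mathcal{N}(\Delta_{r+1,t},\mathcal{F})<n_{t+r,k}$. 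None of these steps appears in your sketch, and the assertion that ``the union of the exceptional directions spans a space of dimension at most $r+t$'' is precisely the statement that needs proof. A smaller point: the extremal families need not be maximal as $r$-wise $t$-intersecting families, so the lower inclusion $\mathcal{F}\supseteq\mathcal{F}^*_{X,k}$ in the equality case comes from observing that deleting any member of $\mathcal{F}^*_{X,k}$ strictly decreases the simplex count, not from maximality of $\mathcal{F}$.
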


Notice that any $r$-wise $t$-intersecting family with $k < r+t-1$ is trivial (otherwise, by Lemma \ref{rt}, we have the family is $(r+t-2)$-intersecting. It indicates that the family is a single $(r+t-2)$-subspace, a contradiction) and contains no simplices. Thus we only need to consider the case $k\geq r+t-1$. We always assume $t \geq 1$, $r \geq 2$ and $k \geq r+t-1$ in the following sections.

\begin{cor}\label{cor}
    Let $\mathcal{F} \subseteq {V \brack k}$ be an intersecting family with $k\geq 1$ and $n \geq 2k+9$. Then
    \[
    \mathcal{N}(\Delta, \mathcal{F}) \leq n_{3,k},
    \]
    with equality holding if and only if $\mathcal{F}^*_{X,k} \subseteq \mathcal{F} \subseteq \mathcal{F}_{X,k}$ for some $3$-subspace $X$ of $V$.
\end{cor}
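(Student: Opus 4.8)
This corollary is exactly the $(r,t)=(2,1)$ instance of Theorem~\ref{th}, except that the hypothesis on the dimension is weakened from $n\ge 3kr^{2}+3krt$ (which for $r=2$, $t=1$ reads $n\ge 18k$) to $n\ge 2k+9$. The plan is therefore to rerun the proof of Theorem~\ref{th} with $r=2$ and $t=1$ fixed throughout, tracking each place where the bound on $n$ is invoked and replacing the one genuinely wasteful estimate by a sharper special-case argument valid already for $n\ge 2k+9$. The argument is organized by the value of the $1$-covering number $\tau_{1}(\mathcal F)$. If $\tau_{1}(\mathcal F)=1$ then $\mathcal F$ is trivial, so $\dim\bigl(\bigcap_{i=1}^{3}F_{i}\bigr)\ge 1$ for any three members and $\mathcal N(\Delta,\mathcal F)=0<n_{3,k}$; hence we may assume $\tau_{1}(\mathcal F)\ge 2$.

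For $\tau_{1}(\mathcal F)\ge 3$ I would invoke the $t=1$ specializations of the structural lemmas behind Theorem~\ref{th}, together with the covering-number machinery of~\cite{AB}: since no $2$-subspace is a $1$-cover of $\mathcal F$, the size $|\mathcal F|$ is forced well below its value for the extremal $\tau_{1}=2$ families --- by a multiplicative factor of order $q^{n-k}$ or more --- and then even the crude bound $\mathcal N(\Delta,\mathcal F)\le\binom{|\mathcal F|}{3}$ (or a sharper direct count over the members of a minimum $1$-cover) places $\mathcal N(\Delta,\mathcal F)$ below $n_{3,k}=\Theta_{q}\bigl(({n-2\brack k-2})^{3}\bigr)$ as soon as $n-k$ exceeds an absolute constant, in particular for $n\ge 2k+9$. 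This is the step where the general proof is wasteful for $(r,t)=(2,1)$, and it is where the room to lower the threshold on $n$ comes from.

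The heart of the proof is the case $\tau_{1}(\mathcal F)=2$, which is where the extremal families live; note first that $\tau_{1}(\mathcal F_{X,k})=2$ for any $3$-subspace $X$. Here I would study the family $\mathcal C_{1}(\mathcal F)$ of $2$-dimensional $1$-covers of $\mathcal F$. Fixing $T\in\mathcal C_{1}(\mathcal F)$, every $F\in\mathcal F$ contains one of the $q+1$ one-dimensional subspaces of $T$, and the goal is a dichotomy: either (a) the members of $\mathcal F$ are confined near a fixed $3$-subspace $X$ in the sense that $\mathcal F^{*}_{X,k}\subseteq\mathcal F\subseteq\mathcal F_{X,k}$, and then $\mathcal N(\Delta,\mathcal F)=n_{3,k}$; or (b) $\mathcal F$ has a member violating this confinement, in which case a counting argument forces $\mathcal N(\Delta,\mathcal F)<n_{3,k}$. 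The count in case (b) uses the exact value of $n_{3,k}$, which enumerates the non-concurrent triples of $2$-subspaces of $X$ (equivalently, non-concurrent triples of lines of $PG(2,q)$) together with their $k$-dimensional extensions, weighed against the main term $({n-2\brack k-2})^{3}$. For the equality statement one also verifies that every $\mathcal F$ with $\mathcal F^{*}_{X,k}\subseteq\mathcal F\subseteq\mathcal F_{X,k}$ really attains $n_{3,k}$: any $F\in\mathcal F_{X,k}$ with $X\subseteq F$ is a vertex of no triangle of $\mathcal F_{X,k}$, since for any further members $F_{2},F_{3}$ the subspaces $F_{2}\cap X$ and $F_{3}\cap X$ are at least $2$-dimensional inside the $3$-dimensional $X$ and hence meet, so $F\cap F_{2}\cap F_{3}\supseteq X\cap F_{2}\cap F_{3}\ne 0$; thus the triangle count is constant across the whole interval between $\mathcal F^{*}_{X,k}$ and $\mathcal F_{X,k}$.

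The main obstacle is case (b): excluding every near-extremal family with $\tau_{1}(\mathcal F)=2$ that is not sandwiched between $\mathcal F^{*}_{X,k}$ and $\mathcal F_{X,k}$, with error terms controlled tightly enough to be dominated by $({n-2\brack k-2})^{3}$ under the weaker hypothesis $n\ge 2k+9$. Concretely this splits into (i) writing $n_{3,k}$ down exactly as a polynomial in Gaussian binomial coefficients, and (ii) a stability-type estimate showing that moving even a bounded number of members of $\mathcal F$ off the pattern $\mathcal F_{X,k}$ destroys more triangles than it creates; the constants in (ii) have to be tracked carefully, but there is no conceptual difficulty beyond what is already present in the proof of Theorem~\ref{th}. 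The cases $\tau_{1}(\mathcal F)\le 1$ and $\tau_{1}(\mathcal F)\ge 3$ are routine by comparison.
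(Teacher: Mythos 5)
Your top-level strategy --- specialize the proof of Theorem~\ref{th} to $(r,t)=(2,1)$ and recheck every place the hypothesis on $n$ is used --- is exactly how the paper obtains Corollary~\ref{cor}: each of Lemmas~\ref{2tau}, \ref{P3}, \ref{ntbig} and \ref{ntsmall} records the exact polynomial inequality in $n$ that its estimate requires, and in every case the $(r,t)=(2,1)$ specialization is a quadratic in $k$ with negative leading coefficient (e.g.\ $n\ge(-3k^2+28k-28)/3$, $n\ge -2k^2+15k-17$, $n\ge -2k^2+14k-8$, $n\ge -3k^2+21k-22$) that is already implied by $n\ge 2k+9$. So no new argument is needed anywhere; the corollary falls out of the same proof. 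However, your proposal mislocates where the bound on $n$ enters and, more importantly, leaves a genuine gap in the main case $\tau_1(\mathcal F)=2$. You present that case as a dichotomy ``either $\mathcal F$ is confined near a fixed $3$-subspace $X$, or a counting/stability argument kills it,'' but you never say how the $3$-subspace $X$ is produced. That is the actual content of Section~3: one shows that the span of two $2$-dimensional $1$-covers meeting in a line is cover-complete (Lemma~\ref{P3}), propagates this to larger cover-complete subspaces (Lemma~\ref{clique}), and then proves that a maximal cover-complete subspace must have dimension exactly $r+t=3$ (Lemmas~\ref{ntbig} and \ref{ntsmall}), whence $\mathcal F\subseteq\mathcal F_{X,k}$ by Proposition~\ref{dim}. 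None of this appears in your outline.

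The replacement you propose for it --- ``a stability-type estimate showing that moving even a bounded number of members of $\mathcal F$ off the pattern $\mathcal F_{X,k}$ destroys more triangles than it creates'' --- is not a proof and is not obviously repairable: a family with $\tau_1(\mathcal F)=2$ that is not contained in any $\mathcal F_{X,k}$ need not be a bounded perturbation of one, so a local exchange argument does not cover case (b). The paper instead bounds $\mathcal N(\Delta,\mathcal F)\le|\mathcal F'|\,|\mathcal F|^{r}$ for an explicit small subfamily $\mathcal F'$ that every simplex must meet, and compares against the lower bound for $n_{t+r,k}$ from Lemma~\ref{n r+1,t}; it is precisely these comparisons whose thresholds must be (and are) checked at $n\ge 2k+9$. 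Your claim that the slack comes only from the $\tau_1\ge 3$ case is also inaccurate: the general threshold $n\ge 3kr^2+3krt$ is driven by the $\tau_1=2$ lemmas, and it is their $(2,1)$-specializations that must be reverified. Your verification that every family between $\mathcal F^*_{X,k}$ and $\mathcal F_{X,k}$ attains $n_{3,k}$ (because a member containing $X$ lies in no triangle) is correct and matches the paper's closing argument, but as written the proposal does not establish the inequality or the characterization of equality in the central case.
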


The paper is organized as follows.  In Section 2, we will provide estimates for the extremal values and prove that the $t$-covering number of the extremal structure is exactly $(t+1)$. In Section 3, we will prove our main results.

\section{Preliminary}

Given a subspace $X\subseteq V$, we say that subspaces $F \subseteq V$ and $G \subseteq V$ are equivalent with respect to $X$, if $F \cap X = G \cap X$ and $F + X = G + X$. For any given $X$, there is a corresponding equivalence relation. For an ordered subspace pair $(X,Y)$ of $V$ such that $X\oplus Y=V$ and $F \subseteq V$, notice that there is a unique subspace $F(Y) \subseteq Y$ such that $(F \cap X) + F(Y)$ is equivalent to $F$ with respect to $X$. We call $(F \cap X, F(Y))$ the projection of $F$ onto $(X, Y)$. A natural question is how many elements are in an equivalence class. The following lemma is helpful for calculation.

\begin{lemma}{\rm(\cite[Lemma~2.1]{wang2010association})}\label{1}
    Fix $A \subseteq V$ with $\dim(A) = i$. Then the number of $j$-subspaces $B \in {V\brack j}$ such that $\dim(A \cap B) = \ell$ is given by \[
    \left|\left\{ B \in {V\brack j} : \dim(A \cap B) = \ell \right\}\right| = q^{(i - \ell)(j - \ell)} {i \brack \ell} {n - i \brack j - \ell}.
    \]
\end{lemma}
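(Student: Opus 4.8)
The plan is to prove the identity by a two-stage count: first fix the intersection $A\cap B$, then count the $j$-subspaces $B$ realizing it as their exact intersection with $A$. Accordingly, I would partition the set $\{B \in {V\brack j} : \dim(A\cap B)=\ell\}$ according to the value of $C:=A\cap B$. Since $A\cap B=C$ forces $C\subseteq B$ and $C$ is an $\ell$-subspace of the $i$-dimensional space $A$, there are exactly ${i\brack \ell}$ admissible choices for $C$. It therefore suffices to show that, for each fixed $\ell$-subspace $C\subseteq A$, the number of $j$-subspaces $B$ with $A\cap B=C$ equals $q^{(i-\ell)(j-\ell)}{n-i\brack j-\ell}$; multiplying by ${i\brack \ell}$ then yields the stated formula.

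To count those $B$, I would pass to the quotient space $V/C$, of dimension $n-\ell$, with quotient map $\pi$. Every such $B$ contains $C$, so its image $\bar B:=B/C$ is a $(j-\ell)$-subspace of $V/C$. Because $C\subseteq A$ and $C\subseteq B$, taking quotients commutes with intersection, giving $(A/C)\cap(B/C)=(A\cap B)/C$; hence $A\cap B=C$ holds if and only if $\bar B$ meets $\pi(A)=A/C$ trivially, where $\dim(A/C)=i-\ell$. Thus the problem reduces to counting $(j-\ell)$-subspaces of an $(n-\ell)$-dimensional space that intersect a fixed $(i-\ell)$-subspace only in $0$.

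The key step is the following standard fact, which I would isolate and prove: in an $N$-dimensional space the number of $m$-subspaces meeting a fixed $d$-subspace $D$ trivially equals $q^{dm}{N-d\brack m}$. I would prove this by fibering over images in $V/D$, which has dimension $N-d$. An $m$-subspace $B$ with $B\cap D=0$ maps isomorphically onto an $m$-subspace $\bar B$ of $V/D$, and there are ${N-d\brack m}$ such images. For a fixed image $\bar B$, the preimage $\pi^{-1}(\bar B)$ is a $(d+m)$-subspace containing $D$, and the $m$-subspaces $B$ with $\pi(B)=\bar B$ and $B\cap D=0$ are precisely the complements of $D$ inside $\pi^{-1}(\bar B)$. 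Counting complements of a $d$-subspace in a $(d+m)$-subspace (for instance, by fixing one reference complement and identifying all complements bijectively with linear maps into $D$) gives exactly $q^{dm}$. Substituting $N=n-\ell$, $d=i-\ell$, $m=j-\ell$ produces $q^{(i-\ell)(j-\ell)}{n-i\brack j-\ell}$, as required.

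The only genuinely delicate point is the complement count $q^{dm}$; everything else is bookkeeping with dimensions and the definition of the Gaussian binomial coefficient. I expect this complement count to be the main obstacle to a fully rigorous write-up, since one must verify both that distinct linear maps into $D$ yield distinct complements and that every complement arises this way, but it is routine once a reference complement of $D$ in $\pi^{-1}(\bar B)$ is fixed.
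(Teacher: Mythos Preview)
Your argument is correct and is one of the standard derivations of this identity. Note, however, that the paper does not actually prove this lemma: it is quoted verbatim from \cite[Lemma~2.1]{wang2010association} and used as a black box, so there is no ``paper's own proof'' to compare against. Your two-stage count (first choose $C=A\cap B\in{A\brack\ell}$, then pass to $V/C$ and count $(j-\ell)$-subspaces disjoint from $A/C$) together with the complement count $q^{dm}$ is exactly how this formula is typically established, and nothing in your outline is missing or wrong.
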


The following  formulas are commonly used  for computation in vector spaces.

\begin{prop}\label{eq}
Let $m$ and $i$ be positive integers with $i\leq m.$ Then the following hold:
\begin{itemize}
\item[{\rm (i)}] ${m\brack i}={m-1\brack i-1}+q^i{m-1\brack i}$ and ${m\brack i}=\frac{q^m-1}{q^i-1}\cdot{m-1\brack i-1}$;
\item[{\rm (ii)}] $q^{m-i}<\frac{q^m-1}{q^i-1}<q^{m-i+1}$ and $q^{i-m-1}<\frac{q^i-1}{q^m-1}<q^{i-m}$ if $i <m$;
\item[{\rm(iii)}] $q^{i(m-i)}\leq{m\brack i}< q^{i(m-i+1)}$ and $q^{i(m-i)}<{m\brack i}$ if $i <m$;
\item[{\rm (iv)}] $\frac{q^m-1}{q^i-1}<2q^{m-i}$.
\end{itemize}
\end{prop}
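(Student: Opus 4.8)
The plan is to derive all four items directly from the product definition ${m\brack i}=\prod_{0\le j<i}\frac{q^{m-j}-1}{q^{i-j}-1}$, since none of them needs the subspace-counting interpretation. I would do (i) first, as it records the two basic algebraic identities, and then obtain (ii) and (iv) by clearing denominators and (iii) by multiplying the bounds from (ii).

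For (i), the factorization identity ${m\brack i}=\frac{q^m-1}{q^i-1}{m-1\brack i-1}$ is immediate: the $j=0$ term of the product defining ${m\brack i}$ is exactly $\frac{q^m-1}{q^i-1}$, and the remaining terms ($j=1,\dots,i-1$) are precisely the product defining ${m-1\brack i-1}$. For the $q$-Pascal recurrence ${m\brack i}={m-1\brack i-1}+q^i{m-1\brack i}$, I would put both summands over the common denominator $(q^i-1)(q^{i-1}-1)\cdots(q-1)$; after pulling the common factor $(q^{m-1}-1)\cdots(q^{m-i+1}-1)$ out of the two numerators, the bracket that remains is $(q^i-1)+q^i(q^{m-i}-1)=q^m-1$, which recombines to the numerator of ${m\brack i}$. (One may also simply cite this as the standard $q$-analogue of Pascal's rule.)

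For (ii), clearing denominators: $\frac{q^m-1}{q^i-1}>q^{m-i}$ is equivalent to $q^m-1>q^m-q^{m-i}$, i.e. $q^{m-i}>1$, which holds since $i<m$; and $\frac{q^m-1}{q^i-1}<q^{m-i+1}$ is equivalent to $q^{m-i+1}-1<q^m(q-1)$, which holds because $q^{m-i+1}\le q^m\le q^m(q-1)$ (using $i\ge1$ and $q\ge2$). The second chain in (ii) is just the reciprocal of the first, with the inequality directions reversed. Item (iv) is handled the same way: $\frac{q^m-1}{q^i-1}<2q^{m-i}$ is equivalent to $2q^m-q^i<q^{m+i}$, and $2q^m-q^i\le 2q^m\le q^{m+1}\le q^{m+i}$ using $1\le i\le m$ and $q\ge2$; the case $i=m$ reduces to $1<2$.

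Finally, (iii) follows by multiplying the estimates in (ii). Writing ${m\brack i}=\prod_{j=0}^{i-1}\frac{q^{m-j}-1}{q^{i-j}-1}$, each of the $i$ factors has the form $\frac{q^{m'}-1}{q^{i'}-1}$ with $m'-i'=m-i$; when $i<m$ part (ii) bounds each factor strictly between $q^{m-i}$ and $q^{m-i+1}$, so the product lies strictly between $q^{i(m-i)}$ and $q^{i(m-i+1)}$. When $i=m$ we have ${m\brack i}=1=q^{i(m-i)}$, which gives the non-strict lower bound in general and the strict one precisely when $i<m$. Every step is elementary manipulation, so there is no real obstacle here; the only points requiring care are keeping the inequality directions correct when passing to reciprocals in (ii) and treating the degenerate case $i=m$ separately, since several of the strict inequalities collapse to equalities there.
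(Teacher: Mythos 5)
Your proof is correct and complete. The paper itself states this proposition without proof, treating these as standard facts about Gaussian binomial coefficients, so there is no argument of the authors' to compare against; your derivation of everything from the product definition ${m\brack i}=\prod_{0\le j<i}\frac{q^{m-j}-1}{q^{i-j}-1}$ — the factorization and $q$-Pascal identity in (i) by direct algebra, (ii) and (iv) by clearing denominators, and (iii) by multiplying the factorwise bounds, with the degenerate case $i=m$ handled separately — is exactly the expected elementary verification and contains no gaps.
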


Next, we provide an estimation for the maximum number of $(r+1,t)$-simplices.

\begin{lemma}\label{n r+1,t}
    For \( n > 2k \), we have
    \[
    n_{t+r,k} \geq \frac{1}{2(r+1)!} q^{r(k-r-t+1) + \frac{r(r+1)}{2}} {n-r-t \brack k-r-t+1}^{r+1} \prod_{i=0}^{r} {t+i \brack t+i-1}.
    \]
    Moreover,
    \begin{align*}\label{eq1}
        \log_qn_{t+r,k}\geq (r+1)(k-r-t+1)n-(k+1)(r+1)(k-r-t+1)+r(k+1)-r^2.
    \end{align*}
\end{lemma}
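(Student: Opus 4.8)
The plan is to build an explicit large collection of $(r+1,t)$-simplices inside $\mathcal{F}_{X,k}$, all of whose members lie in $\mathcal{F}^{*}_{X,k}$, and to count it. Fix a $(t+r)$-subspace $X$ and a complement $Y$, so $V=X\oplus Y$ and $\dim Y=n-t-r=:N$, and set $m=k-t-r+1$; by Lemma~\ref{1}, for each fixed hyperplane $H$ of $X$ there are exactly $q^{m}{N\brack m}$ subspaces $F\in{V\brack k}$ with $F\cap X=H$. I construct a tuple $(F_1,\dots,F_{r+1})$ in three independent stages. First, I choose an ordered tuple $(H_1,\dots,H_{r+1})$ of hyperplanes of $X$ with $\dim(H_1\cap\cdots\cap H_j)=t+r-j$ for every $1\le j\le r+1$; since a hyperplane of $X$ either contains a given subspace or meets it in codimension one, the number of admissible extensions of a valid prefix of length $j$ is ${t+r\brack 1}-{j\brack 1}=q^{j}{t+r-j\brack 1}$, so the number of such tuples is $\prod_{j=0}^{r}q^{j}{t+r-j\brack 1}=q^{r(r+1)/2}\prod_{i=0}^{r}{t+i\brack t+i-1}$. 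Second, I choose an ordered tuple $(Z_1,\dots,Z_{r+1})$ of $m$-subspaces of $Y$ with $\bigcap_{i=1}^{r+1}Z_i=\{0\}$. Third, for each $i$ I choose $F_i\in{V\brack k}$ with $F_i\cap X=H_i$ and $(F_i+X)\cap Y=Z_i$; since $\dim(F_i+X)=k+1=\dim(X+Z_i)$, such $F_i$ are exactly the preimages of the hyperplanes of the $(m+1)$-dimensional space $(X+Z_i)/H_i$ that avoid the line $X/H_i$, of which there are precisely $q^{m}$ for each pair $(H_i,Z_i)$.

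Next I would check that every tuple so produced is an $(r+1,t)$-simplex contained in $\mathcal{F}_{X,k}$. Containment is immediate from $\dim(F_i\cap X)=t+r-1=\dim X-1$. For the $r$-wise $t$-intersecting property, any $r$ of the $F_i$ contain the intersection of $r$ hyperplanes of the $(t+r)$-dimensional space $X$, which has dimension at least $t$. For the simplex condition, note that $(F_i+X)\cap Y$ equals the image $Z_i$ of $F_i$ under the projection $V=X\oplus Y\to Y$; hence if $v\in\bigcap_{i=1}^{r+1}F_i$, its $Y$-component lies in $\bigcap_iZ_i=\{0\}$, so $v\in X$ and therefore $v\in\bigcap_i(F_i\cap X)=\bigcap_iH_i$, a space of dimension $t-1$, giving $\dim\bigl(\bigcap_{i=1}^{r+1}F_i\bigr)\le t-1<t$. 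The $H_i$ are pairwise distinct (the prefix-dimension condition forbids repeats), hence so are the $F_i$; and the three stages are recovered from the tuple $(F_1,\dots,F_{r+1})$ itself, so the construction is injective onto its image and each unordered $(r+1)$-simplex arises from at most $(r+1)!$ ordered tuples.

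It remains to count the admissible $Z$-tuples and assemble the bound. The number of ordered $(r+1)$-tuples of $m$-subspaces of $Y$ with nonzero common intersection is at most ${N\brack 1}{N-1\brack m-1}^{r+1}$ (pick a line in the intersection, then the subspaces through it); using ${N\brack m}=\tfrac{q^{N}-1}{q^{m}-1}{N-1\brack m-1}$ and Proposition~\ref{eq}, this is less than $\tfrac{q^{m}-1}{q-1}\bigl(\tfrac{q^{m}-1}{q^{N}-1}\bigr)^{r}{N\brack m}^{r+1}<q^{\,m-r(N-m)}{N\brack m}^{r+1}=q^{\,m-r(n-k-1)}{N\brack m}^{r+1}$, and since $n>2k$ gives $n-k-1\ge k$ while $m=k-t-r+1\le k-2$ and $r\ge2$, the exponent is at most $-k-2<0$; hence fewer than half of all $Z$-tuples are bad, so at least $\tfrac12{N\brack m}^{r+1}$ are admissible. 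Multiplying the counts from the three stages and dividing by $(r+1)!$ yields
\[
n_{t+r,k}\ \ge\ \frac{1}{2(r+1)!}\,q^{\,r(r+1)/2}\Bigl(\prod_{i=0}^{r}{t+i\brack t+i-1}\Bigr){N\brack m}^{r+1}q^{(r+1)m}\ \ge\ \frac{1}{2(r+1)!}\,q^{\,r(k-r-t+1)+\frac{r(r+1)}{2}}\Bigl(\prod_{i=0}^{r}{t+i\brack t+i-1}\Bigr){n-r-t\brack k-r-t+1}^{r+1},
\]
the last step merely discarding the factor $q^{m}\ge1$; this is the first asserted inequality.

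For the logarithmic bound I would take $\log_{q}$ of the displayed estimate and invoke Proposition~\ref{eq}: from ${a\brack 1}\ge q^{a-1}$ one gets $\log_{q}\prod_{i=0}^{r}{t+i\brack t+i-1}\ge(r+1)(t-1)+\tfrac{r(r+1)}{2}$, and from ${N\brack m}\ge q^{m(N-m)}$ one gets $(r+1)\log_{q}{N\brack m}\ge(r+1)(k-r-t+1)(n-k-1)$, whose leading part is exactly $(r+1)(k-r-t+1)n-(k+1)(r+1)(k-r-t+1)$; collecting the remaining terms and using that $-\log_q\bigl(2(r+1)!\bigr)$ is absorbed because $q^{r^{2}}\ge 2^{r^{2}}\ge 2(r+1)!$ for all $r\ge2$ reproduces the claimed inequality. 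The routine parts are these Gaussian-coefficient manipulations and the final bookkeeping. The genuinely delicate point — and the whole reason for stage~(c) — is that prescribing only $F_i\cap X=H_i$ does not force $\dim\bigl(\bigcap_{i=1}^{r+1}F_i\bigr)<t$, since the intersection may acquire vectors outside $X$; one must additionally pin down the projections $Z_i=(F_i+X)\cap Y$ and demand $\bigcap_iZ_i=\{0\}$, and then verify, under only the hypothesis $n>2k$, that this holds for almost all choices of the $Z_i$. Keeping the three degrees of freedom — the flag-like hyperplane configuration, the projections $Z_i$, and the $q^{m}$ "twists" per coordinate — cleanly separated so that the counts multiply is the crux of the argument.
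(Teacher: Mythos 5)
Your proposal is correct and follows essentially the same route as the paper's proof: decompose $V=X\oplus Y$, count ordered tuples of hyperplanes of $X$ meeting in dimension $t-1$ (your flag-prefix count reproduces the paper's inductive lower bound for $f(t,r)$), count tuples of $(k-t-r+1)$-subspaces of $Y$ with trivial intersection by excluding those through a common line, multiply by the $q^{k-t-r+1}$ lifts per coordinate, verify the simplex property via the projections, and divide by $(r+1)!$. Your quotient-space argument for the fiber size $q^{m}$ is in fact a cleaner justification than the paper's uniformity/averaging step, and your count retains the factor $(q^m)^{r+1}$ where the paper only uses $(q^m)^{r}$, but this is a cosmetic difference absorbed when matching the stated bound.
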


\begin{proof}
    Let \( X \) be the \( (t+r) \)-subspace given in \( \mathcal{F}_{X,k} \), and let \( Y \) be another space such that \( V = X \oplus Y \).

    \noindent\textbf{Step 1:} Consider the subspaces \( F_1, F_2, \ldots, F_{r+1} \in \mathcal{F}^*_{X,k}\), whose projections onto \( (X, Y) \) are \( (F_1 \cap X, F_1(Y)), (F_2 \cap X, F_2(Y)), \ldots, (F_{r+1} \cap X, F_{r+1}(Y)) \), respectively. We claim that $ \{F_1, F_2, \ldots, $ $F_{r+1}\} $ forms an \( (r+1,t) \)-simplex if \( \dim \left( \bigcap_{i=1}^{r+1} (F_i \cap X) \right) = t-1 \) and \( \bigcap_{i=1}^{r+1} F_i(Y) = \mathbf{0} \).

    First, we verify that it forms an \( r \)-wise \( t \)-intersecting family. By induction on \( i \) and using the dimension formula, we can show that the dimension of the intersection of any \( i \) spaces in \( \{F_1, F_2, \ldots, F_{r+1}\} \) is at least \( t + r - i \). When $i=r$, we have the dimension of the intersection of any \( r \) spaces in \( \{F_1, F_2, \ldots, F_{r+1}\} \) is at least \( t \).

    Next, we verify that \( \dim \left( \bigcap_{i=1}^{r+1} F_i \right) < t \). Indeed, \( \bigcap_{i=1}^{r+1} F_i \subseteq X \), otherwise \( \bigcap_{i=1}^{r+1} F_i(Y) \neq \mathbf{0} \). Therefore, \( \dim \left( \bigcap_{i=1}^{r+1} F_i \right) = \dim \left( \bigcap_{i=1}^{r+1} (F_i \cap X) \right) = t - 1 \).

    \noindent\textbf{Step 2:} Let \( f(t, r) \) be the number of \( (r+1) \)-sequences \( (G_1, G_2, \ldots, G_{r+1}) \) such that \( G_i \in {X\brack t+r-1} \) and \( \dim \left( \bigcap_{i=1}^{r+1} G_i \right) = t - 1 \). We estimate \( f(t, r) \) by induction.

    For \( r = 1 \), fix any given \( G_1 \), by Lemma \ref{1}, there are \( q {t \brack t-1} {1 \brack 1} \) distinct \(  G_2 \). Hence, the number of $(G_1,G_2)$ pairs is \( q {t \brack t-1} {t+1 \brack t} \). For any general \( r \), we first fix \( G_{r+1} \), and there are \( {t+r \brack t+r-1} \) choices for it. Arbitrarily choose \( e \in X \setminus G_{r+1} \), so that \( X = G_{r+1} \oplus \langle e \rangle \). By induction, there are \( f(t, r-1) \) different \( r \)-sequences \( (G_1 \cap G_{r+1}, G_2 \cap G_{r+1}, \ldots, G_r \cap G_{r+1}) \), where each \( G_i \cap G_{r+1} \in {G_{r+1}\brack t+r-2} \), and \( \dim \left( \bigcap_{i=1}^{r} (G_i \cap G_{r+1}) \right) = t - 1 \). Given \( G_1 \cap G_{r+1} \), by Lemma \ref{1}, there are \( q \) distinct choices for \( G_1 \). Therefore, for each given \( (G_1 \cap G_{r+1}, G_2 \cap G_{r+1}, \ldots, G_r \cap G_{r+1}) \), there are \( q^r \) distinct sequences \( (G_1, G_2, \ldots, G_r) \). Finally, we obtain
    \begin{align*}
    f(t, r) &\geq {t+r \brack t+r-1} q^r f(t, r-1)\geq q^{\frac{r(r+1)}{2}} \prod_{i=0}^{r} {t+i \brack t+i-1}.
    \end{align*}

    \noindent\textbf{Step 3:} Let \( g(t, r) \) be the number of \( (r+1) \)-sequences \( (H_1, H_2, \ldots, H_{r+1}) \) such that \( H_i \in {Y\brack k-t-r+1} \), and \( \dim \left( \bigcap_{i=1}^{r+1} H_i \right) = 0 \). Using inclusion-exclusion, we have
    \begin{align*}
        g(t, r) &\geq {n - r - t \brack k - r - t + 1}^{r+1} - {n - r - t \brack 1} {n - r - t - 1 \brack k - r - t}^{r+1}.\\
    \end{align*}
    For $n\geq 2k$, by Proposition \ref{eq}, we have
    \begin{align*}
        {n - r - t \brack 1} {n - r - t - 1 \brack k - r - t}^{r+1}&<q^{(n-r-t)}\left(\frac{q^{k-r-t+1}-1}{q^{n-r-t}-1}{n - r - t  \brack k - r - t+1}\right)^{r+1}\\
        &\leq q^{(n-r-t)}\left(q^{-(n-k-1)}{n - r - t  \brack k - r - t+1}\right)^{r+1}\\
                 &\leq q^{-1}{n - r - t  \brack k - r - t+1}^{r+1}
        \leq \frac{1}{2}{n - r - t \brack k - r - t + 1}^{r+1}.
    \end{align*}
    Thus,
    \begin{align*}
        g(t, r) &\geq \frac{1}{2}{n - r - t \brack k - r - t + 1}^{r+1}.
    \end{align*}
    \noindent\textbf{Step 4:} Given a fixed \( (r+1) \)-sequence \( (G_1, G_2, \ldots, G_{r+1}) \) such that \( G_i \in {X\brack t+r-1} \) and \( \dim \left( \bigcap_{i=1}^{r+1} G_i \right) = t - 1 \), and a fixed \( (r+1) \)-sequence \( (H_1, H_2, \ldots, H_{r+1}) \) such that \( H_i \in {Y\brack k - t - r + 1} \) and \( \dim \left( \bigcap_{i=1}^{r+1} H_i \right) = 0 \). Let \( h(t, r) \) be the number of \( (r+1) \)-sequences \( (F_1, F_2, \) \(\ldots, F_{r+1})\) in \(\mathcal{F}^*_{X,k} \) such that the projection of $F_i$ onto \( (X, Y) \) is \( (G_i, H_i) \).

    From Lemma \ref{1}, we get the number of  $F\in {V\brack k}$ with $\dim(F\cap X)=t+r-1$ is
    \begin{align*}
        q^{k - r - t + 1}{r+t\brack r+t-1}{n - r - t\brack k-r-t+1}.
    \end{align*}
    All the projections of $F$ onto $(X,Y)$ are uniformly distributed over all $(A, B)$, for any $A \in {X\brack t+r-1}$ and $B\in {Y\brack k - t - r + 1}$. Thus, for any $1\leq i \leq r+1$, the number of $F_i$ whose projection onto $(X,Y)$ is $(G_i,H_i)$ is
    \begin{align*}
        q^{k - r - t + 1}{r+t\brack r+t-1}{n - r - t\brack k-r-t+1}/\left({r+t\brack r+t-1}{n - r - t\brack k-r-t+1}\right)=q^{k - r - t + 1}.
    \end{align*} Therefore, we immediately get
    \(
    h(t, r) = \left( q^{k - r - t+1} \right)^r.
    \)

    \noindent\textbf{Step 5:} We are now ready to estimate \( n_{t+r, k} \). By Steps 1 and 4, \( (r+1) \)-sequences \(( F_1, F_2, \ldots, F_{r+1})\) in \( \mathcal{F}^*_{X,k} \) such that the projection of $F_i$ onto \( (X, Y) \) is \( (G_i, H_i) \), form an \( (r+1,t) \)-simplex. By summing up all possible \( (r+1) \)-sequences \( (G_1, G_2, \ldots, G_{r+1}) \) and \( (r+1) \)-sequences \( (H_1, H_2, \ldots, H_{r+1}) \), we count each \( (r+1,t) \)-simplex \( (r+1)! \) times. Hence,
    \begin{align*}
    n_{t+r, k} &\geq \frac{f(t, r) g(t, r) h(t, r)}{(r+1)!} \\
    &\geq \frac{1}{2(r+1)!} q^{r(k-r-t+1) + \frac{r(r+1)}{2}} {n - r - t \brack k - r - t + 1}^{r+1} \prod_{i=0}^{r} {t+i \brack t+i-1}.
    \end{align*}
    By Proposition \ref{eq}, and $\log_q(2(r+1)!)\leq r^2$ we have
    \begin{align*}
        \log_qn_{t+r,k}&\geq -\log_q(2(r+1)!)+ r(k-r-t+1) \\
        &\quad+ \frac{r(r+1)}{2}+(r+1)(k-r-t+1)(n-k-1)+\sum_{i=0}^r(t+i-1)\\
        &\geq (r+1)(k-r-t+1)n-(k+1)(r+1)(k-r-t+1)+r(k+1)-r^2.
    \end{align*}

We get the lemma holds.
\end{proof}

\begin{lemma}\label{rt}
    Let \( \mathcal{F} \subseteq {V \brack k} \) be an \( r \)-wise \( t \)-intersecting family with \( \tau_t(\mathcal{F}) = \ell \) for $r\geq 2$. Then \( \mathcal{F} \) is \( ((r-2)(\ell-t) + t) \)-intersecting.
\end{lemma}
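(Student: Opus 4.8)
The plan is to fix arbitrary $A_1, A_2 \in \mathcal{F}$ and build a nested chain $A_1 \cap A_2 = W_2 \supseteq W_3 \supseteq \cdots \supseteq W_{r-1}$, where $W_{i+1} = W_i \cap A_{i+1}$ for suitably chosen members $A_3, \dots, A_{r-1} \in \mathcal{F}$, arranged so that \emph{each} intersection step loses at least $\ell - t$ dimensions. Since $W_{r-1}$ is an intersection of $r-1$ members of $\mathcal{F}$, for every $A \in \mathcal{F}$ the subspace $W_{r-1} \cap A$ is an intersection of at most $r$ members and hence has dimension at least $t$; thus $W_{r-1}$ is a $t$-cover, so $\dim W_{r-1} \ge \tau_t(\mathcal{F}) = \ell$. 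Running the $r-3$ descent steps backwards then gives $\dim(A_1 \cap A_2) = \dim W_2 \ge \dim W_{r-1} + (r-3)(\ell - t) \ge \ell + (r-3)(\ell - t) = (r-2)(\ell - t) + t$, which is the claim. (The small cases are degenerate: for $r=2$ the statement is just the definition, and for $r=3$ the chain is empty and one only needs that $W_2 = A_1\cap A_2$ is itself a $t$-cover.)

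The engine of the argument — and the step I expect to be the main obstacle — is the following sublemma: \emph{if $W$ is a $t$-cover of $\mathcal{F}$, then there is a member $A \in \mathcal{F}$ with $\dim(W \cap A) \le \dim W - (\ell - t)$.} To prove it, suppose instead that $\dim(W \cap A) \ge \dim W - (\ell - t) + 1$ for every $A \in \mathcal{F}$. Choose any subspace $W' \subseteq W$ with $\dim W' = \ell - 1$ (possible since $\dim W \ge \ell$). For each $A \in \mathcal{F}$, because $W' \subseteq W$ we have $W' \cap A = W' \cap (W \cap A)$ and $W' + (W\cap A) \subseteq W$, so the dimension formula gives
\[
\dim(W' \cap A) \ge \dim W' + \dim(W \cap A) - \dim W \ge (\ell - 1) + \big(\dim W - (\ell - t) + 1\big) - \dim W = t .
\]
Hence $W'$ is a $t$-cover of dimension $\ell - 1 < \ell$, contradicting $\tau_t(\mathcal{F}) = \ell$. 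This sublemma is exactly what powers each descent step: at stage $i$ (with $2 \le i \le r-2$) the partial intersection $W_i = A_1 \cap \cdots \cap A_i$ is a $t$-cover, because $W_i \cap A$ is an intersection of at most $i+1 \le r$ members of $\mathcal{F}$ and so has dimension at least $t$; the sublemma then supplies $A_{i+1}$ with $\dim W_{i+1} \le \dim W_i - (\ell - t)$.

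A few bookkeeping points remain but should be routine to verify: that $r$-wise $t$-intersecting implies $s$-wise $t$-intersecting for every $s \le r$ (immediate); that the index ranges line up so that $W_i \cap A$ always involves at most $r$ members; and that the members $A_3, \dots, A_{r-1}$ produced are genuinely distinct from one another and from $A_1, A_2$ (automatic, since each descent step strictly decreases the dimension whenever $\ell > t$, while the case $\ell = t$ makes the target bound equal to $t$, which holds trivially). No hypotheses on $n$ or $k$ beyond those already in the statement are needed.
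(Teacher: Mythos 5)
Your proof is correct and follows essentially the same route as the paper's: both arguments iteratively intersect with members of $\mathcal{F}$ chosen so that each step drops the dimension by at least $\ell-t$, using the fact that an $(\ell-1)$-subspace cannot be a $t$-cover. The only differences are cosmetic bookkeeping — you package the descent step as a contrapositive sublemma and stop the chain at $r-1$ members (invoking $\dim W_{r-1}\ge\ell$), whereas the paper runs the chain to $r$ members and finishes with $\dim\bigl(\bigcap_{i=1}^r F_i\bigr)\ge t$.
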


\begin{proof}
    Let \( F_1, F_2 \in \mathcal{F} \) be chosen randomly, and suppose that \( \dim(F_1 \cap F_2) = s \). Since \( \mathcal{F} \) is \( r \)-wise \( t \)-intersecting, the dimension of the intersection of \( F_1 \cap F_2 \) with any other space in \( \mathcal{F} \) is at least \( t \). This implies that \( \dim(F_1 \cap F_2) \geq \ell \) by \( \tau_t(\mathcal{F}) = \ell \). For any \( K_3 \in {F_1 \cap F_2 \brack \ell-1} \), \( K_3 \) is not a \( t \)-cover. Therefore, there exists some \( F_3 \in \mathcal{F} \) such that \( \dim(K_3 \cap F_3) \leq t-1 \). So
    \begin{align*}
      \dim(F_1 \cap F_2 \cap F_3) &=\dim(F_1 \cap F_2)+\dim(F_3)-\dim((F_1 \cap F_2)\cup F_3)\\
      &\leq \dim(F_1 \cap F_2)+\dim(F_3)-\dim(K\cup F_3)\\
      &=\dim(F_1 \cap F_2)+\dim(F_3)-(\dim(K)+\dim(F_3)-\dim(K\cup F_3))\\
      &=\dim(F_1 \cap F_2)-\dim(K)+\dim(K\cup F_3)\\
      &\leq s - (\ell - 1) + (t - 1) = s - (\ell - t).
    \end{align*}
    By the property of \( r \)-wise \( t \)-intersecting families, the dimension of the intersection of \( F_1 \cap F_2 \cap F_3 \) with any other space in \( \mathcal{F} \) is at least \( t \), implying that \( \dim(F_1 \cap F_2 \cap F_3) \geq \ell \). For any \( K_4 \in {F_1 \cap F_2 \cap F_3 \brack \ell-1} \), \( K_4 \) is not a \( t \)-cover. Hence, there exists \( F_4 \in \mathcal{F} \) such that \( \dim(K_4 \cap F_4) \leq t-1 \). This results in the inequality
    \[
    \dim(F_1 \cap F_2 \cap F_3 \cap F_4) \leq (s - (\ell - t)) - (\ell - 1) + (t - 1) = s - 2(\ell - t).
    \]
    Continuing this process, we obtain
    \[
    \dim\left( \bigcap_{i=1}^r F_i \right) \leq s - (r - 2)(\ell - t).
    \]

    Since \( \mathcal{F} \) is \( r \)-wise \( t \)-intersecting, we have that \( s \geq (r - 2)(\ell - t) + t \). Therefore, \( \mathcal{F} \) is \( ((r - 2)(\ell - t) + t) \)-intersecting, as required. This completes the proof, with the result following from the randomness of the choices of \( F_1 \) and \( F_2 \).
\end{proof}

\begin{lemma}\label{cross}
    Let \( \mathcal{F}_1, \mathcal{F}_2, \ldots, \mathcal{F}_m\subseteq  {V\brack k} \) be \( k \)-uniform $m$-cross \( s \)-intersecting families, where \( \tau_t(\mathcal{F}_i) \geq \ell \) \( (\ell \geq t+1 )\) for \( 1 \leq i \leq m \), $n\geq 2k-s$. For each \( 1 \leq i \leq m \), we have
    \[
    |\mathcal{F}_i| \leq {k \brack \ell - 1} {s \brack \ell - 1}^{-1} {k \brack s} {n - k + t - \ell \brack k - s + t - \ell}.
    \]
\end{lemma}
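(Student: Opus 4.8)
The plan is to reduce to bounding one family, say $\mathcal{F}_1$, and to combine two ingredients: the cross $s$-intersecting condition, which forces every $F\in\mathcal{F}_1$ to meet a fixed $k$-subspace in dimension at least $s$; and the hypotheses $\tau_t(\mathcal{F}_i)\ge\ell$, which we spend — one dimension at a time — to force $\ell-t$ further dimensions of each $F$ into a bounded region that is essentially disjoint from that $k$-subspace. (We may assume $s\ge\ell-1$, since otherwise ${s\brack\ell-1}=0$ and the stated bound is vacuous; then also $s\ge t$.)

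First I would fix $G\in\mathcal{F}_2$ (we may assume every $\mathcal{F}_i\neq\varnothing$); by the $m$-cross $s$-intersecting property $\dim(F\cap G)\ge s$ for all $F\in\mathcal{F}_1$, so $G$ is an $s$-cover, hence a $t$-cover, of $\mathcal{F}_1$, and $\ell\le\tau_t(\mathcal{F}_1)\le k$. Double counting the pairs $(F,W)$ with $F\in\mathcal{F}_1$ and $W$ an $(\ell-1)$-subspace of $F\cap G$: each $F$ contributes at least ${s\brack\ell-1}$ of them and each of the at most ${k\brack\ell-1}$ subspaces $W\subseteq G$ occurs at most $\bigl|\{F\in\mathcal{F}_1:W\subseteq F\}\bigr|$ times, so it suffices to bound this last quantity, for fixed $W$, by ${k\brack s}{n-k+t-\ell\brack k-s+t-\ell}$. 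Here I would use $\tau_t(\mathcal{F}_2)\ge\ell$: since $\dim W=\ell-1<\ell$, $W$ is not a $t$-cover of $\mathcal{F}_2$, so there is $G'\in\mathcal{F}_2$ with $\dim(W\cap G')\le t-1$; for $F\in\mathcal{F}_1$ with $W\subseteq F$ the space $M:=F\cap G'$ has $\dim M\ge s$ and $\dim(W\cap M)\le t-1$, so choosing an $s$-subspace $S\subseteq M$ with $W\cap G'\subseteq S$ and putting $U:=W+S$ gives $\dim U\ge s+(\ell-t)$ while $U\cap G'=S$ has dimension exactly $s$. The role of this normalization is that $U$ and $F$ meet $G'$ ``in the same place'', so the tail $F/U\subseteq V/U$ is, up to a controlled surplus coming from $\dim(F\cap G')-s$, transversal to $(G'+U)/U$, a subspace of codimension essentially $k-s$ in $V/U$; by Lemma~\ref{1} the number of such tails is governed by ${n-k+t-\ell\brack k-s+t-\ell}$, and there are at most ${k\brack s}$ choices of $S$. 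An alternative to the double counting step is a Frankl-type greedy process off $\tau_t(\mathcal{F}_1)\ge\ell$, enlarging a bounded ``core'' subspace by one dimension in each of $\ell-t$ rounds using a witness of the current space failing to be a $t$-cover; both routes land on the same anchor-plus-tail picture.

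The remaining work is bookkeeping: summing the tail count over the $\le{k\brack s}$ anchors and the factor ${k\brack\ell-1}{s\brack\ell-1}^{-1}$ from the double count, absorbing the surplus terms $\dim(F\cap G')-s$ and the bounded $q$-powers produced by Lemma~\ref{1}, and verifying the required monotonicity of Gaussian binomials; this uses Proposition~\ref{eq} and is exactly where $n\ge 2k-s$ enters, since it makes ${n-k+t-\ell\brack k-s+t-\ell}$ legitimate and the auxiliary binomials well-behaved. I expect the middle step to be the real obstacle: one must arrange that the $\ell-t$ extra dimensions genuinely sit outside the fixed $k$-cover, so that ``$\dim(F\cap G)\ge s$'' together with ``$F$ meets the bounded region in $\ell-1$ dimensions'' yields a true $(s+\ell-t)$-dimensional anchor rather than only an $s$-dimensional one, and then exploit transversality to upgrade the crude estimate ${n-s-\ell+t\brack k-s-\ell+t}$ to the claimed ${n-k+t-\ell\brack k-s+t-\ell}$ — naively one is off by a factor that grows polynomially, indeed like a power of $q$, in $n$, and closing that gap is where the argument must be sharp.
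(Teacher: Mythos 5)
Your proposal follows the paper's route step for step: fix $G\in\mathcal{F}_2$, double-count the $(\ell-1)$-subspaces $W$ of $F\cap G$ (each $F$ carrying at least ${s\brack \ell-1}$ of the at most ${k\brack \ell-1}$ subspaces $W\in{G\brack \ell-1}$), then for each $W$ invoke $\tau_t(\mathcal{F}_2)\geq\ell$ to produce a witness $G'\in\mathcal{F}_2$ with $\dim(W\cap G')\leq t-1$, anchor each $F\supseteq W$ on an $s$-subspace of $F\cap G'$ (at most ${k\brack s}$ choices), and count extensions. Up to that point you and the paper agree exactly.

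The genuine gap is the final count, which you have correctly located but not closed. With $U=W+S$ one is only guaranteed $\dim U\geq s+\ell-t$, so the number of $k$-spaces containing $U$ is ${n-s+t-\ell\brack k-s+t-\ell}$, not the claimed ${n-k+t-\ell\brack k-s+t-\ell}$; since both binomials have the same lower index and $s\leq k$, the claimed one is genuinely smaller. Your transversality repair does not bridge this: by Lemma~\ref{1}, the number of $(k-\dim U)$-subspaces of $V/U$ meeting the $(k-s)$-dimensional image of $G'$ trivially is $q^{(k-s)(k-s+t-\ell)}{n-k+t-\ell\brack k-s+t-\ell}$, and the prefactor $q^{(k-s)(k-s+t-\ell)}$ is precisely the ratio between the two binomials, so nothing is gained and it cannot be ``absorbed'' in the bookkeeping. (Note also that this discrepancy is a fixed power of $q$ depending on $k,s,\ell,t$; it does not grow with $n$ as you suggest.) You should be aware, however, that the paper's own proof is no better at this point: it asserts $\left|\{F:\mathrm{span}\{X,H\}\subseteq F\}\right|\leq{n-k+t-\ell\brack k-s+t-\ell}$ ``from Lemma~\ref{1}'', whereas Lemma~\ref{1} applied to $\mathrm{span}\{X,H\}$, whose dimension is only guaranteed to be at least $s+\ell-t$ (and is at most $s+\ell-1$, never $k+\ell-t$ when $k>s+t-1$), yields only ${n-s+t-\ell\brack k-s+t-\ell}$. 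So your write-up faithfully reproduces the paper's argument, stalling at exactly the step the paper leaves unjustified; with the corrected top index $n-s+t-\ell$ the proof does close, and the resulting weaker bound appears to still suffice (with adjusted thresholds on $n$) for the downstream uses in Proposition~\ref{size} and Lemma~\ref{2tau}, since the lost factor is independent of $n$ while those estimates carry slack of order $q^{\Theta(n)}$.
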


\begin{proof}
    Without loss of generality, we bound the size of \( \mathcal{F}_1 \). Fix \( G \in \mathcal{F}_2 \). For every \(( \ell - 1 \))-subspace \( X \) of \( G \), we count all the \( F \in \mathcal{F}_1 \) so that  \( X\subseteq F \). Since \( \dim(F \cap G) \geq s \), each \( F \in \mathcal{F}_1 \) is counted at least \( {s \brack \ell - 1} \) times. Therefore, we have
    \[
    |\mathcal{F}_1| \leq {s \brack \ell - 1}^{-1} \sum_{X \in {G\brack l-1}} \left|\left
    \{ F \in \mathcal{F}_1 : X \in {F\brack\ell - 1} \right\} \right|.
    \]

    For any \( X \in {F\brack \ell - 1} \), there exists a \( G_X \in \mathcal{F}_2 \) such that \( \dim(G_X \cap X) \leq t - 1 \) by $\tau_t(\mathcal{F}_2) \geq \ell$. Now, for each \( F \in \mathcal{F}_1 \), we first choose an \( s \)-subspace \( H \) of \(  G_X \). From Lemma \ref{1}, for any fixed $H\in {  G_X \brack s}$, the number of \( F \in \mathcal{F}_1 \) containing \( X \) and \( H \) is bounded by
    \begin{align*}
        \left| \{ F \in \mathcal{F}_1 : X \subseteq F, H \subseteq F \} \right| &=\left| \{ F \in \mathcal{F}_1 : \text{span}\{X ,H\} \subseteq F \} \right| \\
        &\leq {n - k + t - \ell \brack k - s + t - \ell}.
    \end{align*}
    Here are ${k \brack s}$ choices of $H$. Thus, the total number of \( F \in \mathcal{F}_1 \) containing \( X \) is bounded by
    \begin{align*}
        \left| \{ F \in \mathcal{F}_1 : X \subseteq F \} \right| &\leq\sum_{H\in { G_X\brack s}}\left| \{ F \in \mathcal{F}_1 : X \subseteq F, H \subseteq F \} \right|\\
        &\leq {k \brack s} {n - k + t - \ell \brack k - s + t - \ell}.
    \end{align*}

    This completes the proof for the bound on \( |\mathcal{F}_1| \).
\end{proof}

\begin{lemma}\label{2tau}
    Let \( \mathcal{F} \subseteq {V \brack k} \) be an \( r \)-wise \(( r \geq 2 )\) \( t \)-intersecting family with \( \tau_t(\mathcal{F}) \geq t + 2 \) and \( n > 4k + 4kt/r \)  $($or $n\geq 2k+9$ for the case $r=2,t=1)$. Then,
    \[
    \mathcal{N}(\Delta_{r+1,t}, \mathcal{F}) < n_{t+r,k}.
    \]
\end{lemma}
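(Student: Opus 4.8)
The plan is to bound $\mathcal{N}(\Delta_{r+1,t},\mathcal{F})$ from above by a quantity whose $q$-logarithm is strictly smaller than the lower bound for $\log_q n_{t+r,k}$ supplied by Lemma \ref{n r+1,t}. The first step is to notice that every $(r+1,t)$-simplex $\{F_1,\dots,F_{r+1}\}$ in $\mathcal{F}$ is in particular an $r$-wise $t$-intersecting subfamily of size $r+1$, so it suffices to count $(r+1)$-tuples of members of $\mathcal{F}$; a crude bound is $\mathcal{N}(\Delta_{r+1,t},\mathcal{F})\le |\mathcal{F}|^{r+1}/(r+1)!$, but this is far too weak. Instead I would count a simplex by first fixing the ``last'' subspace $F_{r+1}$ and then counting the remaining $r$-tuple $(F_1,\dots,F_r)$; since these $r$ subspaces pairwise $t$-intersect and more, their common intersection $W=\bigcap_{i=1}^r F_i$ has dimension $\ge t$, and the constraint that the simplex be a $\Delta_{r+1,t}$ forces $\dim(W\cap F_{r+1})\le t-1$, hence $F_{r+1}$ must avoid being a ``full'' $t$-intersection with $W$. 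The key leverage comes from $\tau_t(\mathcal{F})\ge t+2$: by Lemma \ref{rt}, $\mathcal{F}$ is $((r-2)(\ell-t)+t)$-intersecting with $\ell=\tau_t(\mathcal{F})\ge t+2$, and more importantly the family cannot be pinned down by any $(t+1)$-dimensional subspace, so the subfamilies appearing when we fix some members remain ``spread out'' and the cross-intersection bound of Lemma \ref{cross} applies with $\ell\ge t+2$ rather than $\ell=t+1$.

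Concretely, here is the step sequence I would carry out. Step 1: fix a simplex and let $W_j=\bigcap_{i\ne j}F_i$ for each $j$; each $W_j$ has dimension $\ge t$, and $W_j\not\subseteq F_j$ (else the total intersection would have dimension $\ge t$), and one checks that in fact $\dim W_j = t$ for all $j$ is the ``generic'' situation forced by $n$ large. Step 2: peel off $F_{r+1}$. Given $F_1,\dots,F_r$, the number of valid choices of $F_{r+1}\in\mathcal{F}$ is at most $|\mathcal{F}|$, but we will instead estimate the number of $r$-tuples $(F_1,\dots,F_r)$ and multiply by a bound on the number of completions. Step 3: the $r$-tuple $(F_1,\dots,F_r)$ determines $W=\bigcap_{i=1}^r F_i\in{V\brack \ge t}$; for fixed $W$, each $F_i$ contains $W$ and the families $\{F\in\mathcal{F}:W\subseteq F\}$ partitioned appropriately are $m$-cross-$t$-intersecting (indeed cross-$\dim W$-intersecting through $W$). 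Because $\tau_t(\mathcal{F})\ge t+2$, no single $(t+1)$-space covers $\mathcal{F}$, which lets us invoke Lemma \ref{cross} with parameter $\ell=t+2$, $s=t$, giving each of these families size at most roughly $q^{t}\,{n-k-2\brack k-2}\approx q^{(k-t-2)(n-k)+O(k^2)}$, a factor of $q^{-(n-k)}$ smaller per index than the trivial family. Step 4: assemble the pieces: summing over the $O(q^{(t+1)(n-t-1)})$ choices of the $t$- or $(t+1)$-dimensional pivot space $W$ (and the $O(q^{k^2})$ internal choices), and over the $r+1$ subspaces each contributing a factor of at most $q^{(k-t-2)(n-k)+O(k^2)}$, we obtain
\[
\log_q \mathcal{N}(\Delta_{r+1,t},\mathcal{F}) \le (r+1)(k-t-2)(n-k) + (t+1)n + O(k^2 r),
\]
and one checks, using $n\ge 4k+4kt/r$ (respectively $n\ge 2k+9$ when $r=2$, $t=1$), that this is strictly less than the bound $(r+1)(k-r-t+1)n - (k+1)(r+1)(k-r-t+1)+r(k+1)-r^2$ from Lemma \ref{n r+1,t}, since the leading coefficient in $n$ drops from $(r+1)(k-r-t+1)$ to at most $(r+1)(k-t-2)+ (t+1) = (r+1)(k-t-1) - (r+1) + (t+1)$ which is smaller as soon as $r\ge 2$ (the gain is at least $(r+1) - (t+1) + (r+1)(r-1) \ge 1$ when counted carefully, and the $n$-coefficient comparison is what the hypothesis on $n$ is tuned to make decisive).

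The main obstacle, and the place where care is needed, is Step 3: correctly setting up the cross-intersecting structure so that Lemma \ref{cross} can be applied with $\ell=t+2$ uniformly. The subtlety is that after fixing a pivot $W$ the residual families need not literally be $r$-wise $t$-intersecting with covering number $\ge t+2$; one has to argue that $\tau_t(\mathcal{F})\ge t+2$ is inherited in the right form, possibly by choosing $W$ to be a witness of $\dim W = t$ and then exhibiting, for each $(t+1)$-subspace $X\supseteq W$, a member of $\mathcal{F}$ that meets $X$ in dimension $\le t$ — which is exactly the failure of $X$ to be a $t$-cover. A second, more bookkeeping obstacle is making the polynomial-in-$q^{k},q^{r}$ error terms explicit enough to verify the final inequality for the stated ranges of $n$; this is routine but must be done with the constants in Lemma \ref{n r+1,t} and Proposition \ref{eq} in hand, and the case $r=2$, $t=1$ with the sharper threshold $n\ge 2k+9$ will require the tightest version of these estimates.
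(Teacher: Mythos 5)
There are genuine gaps here, and the most important one is strategic: the ``far too weak'' bound you dismiss at the outset, $\mathcal{N}(\Delta_{r+1,t},\mathcal{F})\le|\mathcal{F}|^{r+1}$, is exactly what the paper uses, and it suffices. The point you miss is that the hypothesis $\tau_t(\mathcal{F})\ge t+2$ already forces $|\mathcal{F}|$ \emph{itself} to be small: Lemma \ref{rt} with $\ell\ge t+2$ makes $\mathcal{F}$ a $(2r+t-4)$-intersecting family, and then Lemma \ref{cross} applied to $\mathcal{F}_1=\mathcal{F}_2=\mathcal{F}$ with $s=2r+t-4$ and $\ell=t+2$ gives $|\mathcal{F}|\le{k\brack t+1}{2r+t-4\brack t+1}^{-1}{k\brack 2r+t-4}{n-k-2\brack k-2r-t+2}$, whose leading $n$-coefficient is $k-2r-t+2$. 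Raising to the power $r+1$ gives leading coefficient $(r+1)(k-2r-t+2)$, which undercuts the coefficient $(r+1)(k-r-t+1)$ of $\log_q n_{t+r,k}$ by $(r+1)(r-1)=r^2-1\ge 3$ --- plenty of room, with no need to exploit the simplex structure at all.

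Your substitute argument does not close, for three concrete reasons. First, your Step 3 invokes Lemma \ref{cross} with $s=t$ and $\ell=t+2$; the bound there carries the factor ${s\brack\ell-1}^{-1}={t\brack t+1}^{-1}$, and the underlying double count needs every $F$ to be counted at least ${s\brack\ell-1}\ge 1$ times, so with $s<\ell-1$ the lemma yields nothing. Second, you correctly identify that the residual families through a pivot $W$ need not inherit $\tau_t\ge t+2$, but you leave this unresolved, and it is not a formality: the whole gain in your scheme is supposed to come from applying the cross bound with $\ell=t+2$ to those residual families. Third, even granting Steps 1--3, your final comparison contains a sign error: the gap between $(r+1)(k-r-t+1)$ and your claimed coefficient $(r+1)(k-t-2)+(t+1)$ is $(r+1)(3-r)-(t+1)$, not $(r+1)r-(t+1)$ as you assert; this is negative for every $r\ge 3$, and for $r=2$ it is positive only when $t=1$. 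So your upper bound would fail to beat $n_{t+r,k}$ in the leading order for almost all parameter values covered by the lemma.
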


\begin{proof}
    By Lemma \ref{rt}, \( \mathcal{F} \) is \( (2r + t - 4) \)-intersecting. Applying Lemma \ref{cross}, with $m=2$, \( \mathcal{F}_1 = \mathcal{F}_2 = \mathcal{F} \), \( s = 2r + t - 4 \), and \( \ell = t + 2 \), we get
    \[
    |\mathcal{F}| \leq {k \brack t + 1} {2r + t - 4 \brack t + 1}^{-1} {k \brack 2r + t - 4} {n - k - 2 \brack k - 2r - t + 2}.
    \]
   By Proposition \ref{eq} and Lemma \ref{n r+1,t}, we have
   \begin{align*}
	\log_q\mathcal{N}(\Delta_{r+1,t},\mathcal{F}) &\leq \log_q|\mathcal{F}|^{r+1} \\
	&\leq \log_q\Big({k\brack t+1}{2r+t-4\brack t+1}^{-1}{k\brack 2r+t-4}{n-k-2\brack k-2r-t+2}\Big)^{r+1}\\
        &< (r+1)((t+1)(k-t-1)-(t+1)(2r-5)\\
        &\quad+(2r+t-4)(k-2r-t+5)+(k-2r-t+2)(n-2k+2r+t-3))
        \end{align*}
        \begin{align*}
        &\leq (r+1)(k-r-t+1)n-(k+1)(r+1)(k-r-t+1)+r(k+1)-r^2\\
	&< \log_qn_{t+r,k}.
    \end{align*}
    The fourth inequality holds for $n\geq (-8r^3 + 7kr^2 + 18r^2 - 3rt^2 + 4krt - rk^2 - 2kr + 6rt + 3r - 3t^2 - 10tr^2 + 4kt + 16t - k^2 - 8k - 21)/(r^2-1)$. For simplicity, we can assume $n\geq\frac{4kr+4kt}{r}$. Specifically, when $r = 2$ and $t = 1$, the corresponding range is $n\geq\frac{-3k^2+28k-28}{3}$, which holds when $n \geq 2k+9$.
\end{proof}

If $\tau_t(\mathcal{F}) = t$, then $\mathcal{F}$ is a trivial $t$-intersecting family that does not contain any $(r+1,t)$-simplex. By Lemma \ref{2tau}, there cannot be too many $(r+1,t)$ -simplices when $\tau_t(\mathcal{F}) \geq t+2$. Thus, we only need to focus on the case where $\tau_t(\mathcal{F}) = t+1$.

\section{Proof of the main results}

 Let $\mathcal{F} \subseteq {V \brack k}$ be an $r$-wise $t$-intersecting family with $k \geq t \geq 1$ and $r \geq 2$. In this section, we assume that $\tau_t(\mathcal{F}) = t+1$. Our goal is to characterize $\mathcal{C}_t(\mathcal{F})$.

A subspace $X$ of $V$ is called \emph{cover-complete} (with respect to $\mathcal{F}$) if any $(t+1)$-subspace of $X$ is an element of $\mathcal{C}_t(\mathcal{F})$. It is easy to verify that the following proposition holds.

\begin{prop}\label{dim}
	Let $X$ be a cover-complete subspace. Then for any $F \subseteq \mathcal{F}$, we have $\dim(F \cap X) \geq \dim(X) - 1$.
\end{prop}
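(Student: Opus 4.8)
The plan is to argue by contradiction: assume $X$ is cover-complete and yet some $F \in \mathcal{F}$ has $\dim(F \cap X) \le \dim(X) - 2$, and then produce a single $(t+1)$-subspace of $X$ that is \emph{not} a $t$-cover of $\mathcal{F}$, contradicting the definition of cover-complete. Write $d = \dim(X)$ and $W = F \cap X$. The crucial observation is that for every subspace $T \subseteq X$ one has $T \cap F = (T \cap X) \cap F = T \cap W$, so the whole matter reduces to finding $T \in {X \brack t+1}$ with $\dim(T \cap W) \le t-1$: such a $T$ would satisfy $\dim(T \cap F) \le t-1 < t$, so it cannot lie in $\mathcal{C}_t(\mathcal{F})$, contradicting cover-completeness of $X$. (One should note the mild caveat that this makes sense only when $d \ge t+1$; if $d \le t$ then $X$ has no $(t+1)$-subspace and cover-completeness is vacuous — but in every application of this proposition $X$ has dimension at least $t+1$.)

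To construct $T$, I would set $m = \min(\dim W,\, t-1)$, fix an $m$-dimensional subspace $U \subseteq W$, and then extend $U$ inside $X$ by greedily choosing vectors $e_1, \dots, e_{t+1-m} \in X$ with each $e_i \notin W + \langle e_1, \dots, e_{i-1}\rangle$. The hypothesis $\dim W \le d-2$ together with $d \ge t+1$ yields $d - \dim W \ge \max(2,\, t+1-\dim W) = t+1-m$, which is exactly the amount of room needed for this greedy selection to go through; the span $E = \langle e_1, \dots, e_{t+1-m}\rangle$ then has dimension $t+1-m$ and meets $W$ trivially, so $T := U + E$ is a $(t+1)$-subspace of $X$ with $T \cap W = U$, i.e. $\dim(T \cap F) = \dim(T \cap W) = m \le t-1$. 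This is the $T$ we wanted, and the contradiction follows.

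The whole argument is elementary and short — which is why the paper calls the proposition ``easy to verify''. The only point needing any care is the dimension bookkeeping in the construction of $T$: checking that the greedy choice of the $e_i$ never runs out of space (this is precisely where the assumption $\dim(F \cap X) \le d-2$ gets used), and handling uniformly the ``small $W$'' case $\dim W < t-1$, in which $W$ is already too low-dimensional to meet any $(t+1)$-subspace of $X$ in dimension $\ge t$, so essentially any $T \in {X \brack t+1}$ works.
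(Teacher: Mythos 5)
Your argument is correct and is precisely the verification the authors omit when they say the proposition is ``easy to verify'': assuming $\dim(F\cap X)\le \dim(X)-2$, you exhibit a $(t+1)$-subspace $T\subseteq X$ with $\dim(T\cap F)=\dim(T\cap (F\cap X))\le t-1$, so $T\notin\mathcal{C}_t(\mathcal{F})$, contradicting cover-completeness; the dimension bookkeeping in your greedy construction checks out. Your caveat that the statement is only meaningful for $\dim(X)\ge t+1$ is consistent with the paper, where every cover-complete subspace considered contains a $(t+1)$-subspace lying in $\mathcal{C}_t(\mathcal{F})$.
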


\begin{prop}\label{size}
	$|\mathcal{F}| \leq {k \brack t} {r+t-2 \brack t}^{-1} {k \brack r+t-2} {n-k-1 \brack k-r-t+1}$ for $n\geq 2k-r-t+2$.
\end{prop}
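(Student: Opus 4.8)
The plan is to obtain this bound purely by combining Lemma~\ref{rt} and Lemma~\ref{cross}, applied to $\mathcal{F}$ paired with itself. First I would invoke Lemma~\ref{rt}: since $\tau_t(\mathcal{F}) = t+1$, that lemma with $\ell = t+1$ says $\mathcal{F}$ is $((r-2)(\ell-t)+t)$-intersecting, i.e.\ $(r+t-2)$-intersecting. In particular the pair $\mathcal{F}_1 = \mathcal{F}_2 = \mathcal{F}$ is $2$-cross $(r+t-2)$-intersecting, which puts us in a position to apply the cross-intersecting size estimate.

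Next I would apply Lemma~\ref{cross} with $m = 2$, $\mathcal{F}_1 = \mathcal{F}_2 = \mathcal{F}$, $s = r+t-2$, and $\ell = t+1$. Its hypotheses are met: $\tau_t(\mathcal{F}) = t+1 = \ell \geq t+1$, so indeed $\tau_t(\mathcal{F}_i) \geq \ell$ for each $i$; and the dimension requirement $n \geq 2k - s = 2k - r - t + 2$ is exactly the hypothesis of the proposition. Lemma~\ref{cross} then yields
\[
|\mathcal{F}| \leq {k \brack \ell - 1}{s \brack \ell - 1}^{-1}{k \brack s}{n - k + t - \ell \brack k - s + t - \ell}.
\]

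Finally I would substitute $\ell - 1 = t$, $s = r + t - 2$, $n - k + t - \ell = n - k - 1$, and $k - s + t - \ell = k - r - t + 1$, which rewrites the right-hand side as ${k \brack t}{r+t-2 \brack t}^{-1}{k \brack r+t-2}{n-k-1 \brack k-r-t+1}$, the claimed bound. The parameters are all legitimate: $s = r+t-2 \geq t = \ell - 1$ because $r \geq 2$, and $k - r - t + 1 \geq 0$ because $k \geq r+t-1$. There is no real obstacle here beyond bookkeeping — the only care required is verifying the hypotheses of Lemma~\ref{cross} and carrying out the elementary arithmetic of the substitution correctly.
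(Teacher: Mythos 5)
Your proposal is correct and follows exactly the paper's own proof: Lemma~\ref{rt} (with the standing assumption $\tau_t(\mathcal{F})=t+1$, so $\ell=t+1$) gives that $\mathcal{F}$ is $(r+t-2)$-intersecting, and then Lemma~\ref{cross} with $\mathcal{F}_1=\mathcal{F}_2=\mathcal{F}$, $s=r+t-2$, $\ell=t+1$ yields the stated bound after the same substitutions. Nothing to add.
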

\begin{proof}
	By Lemma \ref{rt}, $\mathcal{F}$ is $(r+t-2)$-intersecting. In Lemma \ref{cross}, let $\mathcal{F}_1 = \mathcal{F}_2 = \mathcal{F}$, $s = r+t-2$ and $\ell = t+1$. Then we have $|\mathcal{F}| \leq {k \brack t} {r+t-2 \brack t}^{-1} {k \brack r+t-2} {n-k-1 \brack k-r-t+1}$.
\end{proof}
Since these two propositions are used frequently in the following proofs, we will not explicitly refer to them when using.

\begin{lemma}\label{P3}
	Let $X, Y \in \mathcal{C}_t(\mathcal{F})$ with $\dim(X \cap Y) = t$ and $n \geq 3kr^2 + 3krt$  $($or $n\geq 2k+9$ for the case $r=2,t=1)$. For any $(t+1)$-subspace $Z \subseteq \text{span}\{X, Y\}$, at least one of the following holds:
	\begin{enumerate}
		\item $Z \in \mathcal{C}_t(\mathcal{F})$ $($i.e., $\text{span}\{X, Y\}$ is cover-complete$)$.
		\item $\mathcal{N}(\Delta_t, \mathcal{F}) < n_{t+r,k}$.
	\end{enumerate}
\end{lemma}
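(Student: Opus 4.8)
The plan is to assume that the second alternative fails, i.e.\ $\mathcal{N}(\Delta_{r+1,t},\mathcal{F})\geq n_{t+r,k}$, and deduce that $\operatorname{span}\{X,Y\}$ is cover-complete. Write $W=\operatorname{span}\{X,Y\}$, a $(t+2)$-subspace since $\dim(X\cap Y)=t$. Fix a $(t+1)$-subspace $Z\subseteq W$ with $Z\neq X$ and $Z\neq Y$; we must show $Z\in\mathcal{C}_t(\mathcal{F})$, that is, $Z$ is a $t$-cover of $\mathcal{F}$. Suppose not: there is $F_0\in\mathcal{F}$ with $\dim(Z\cap F_0)\leq t-1$. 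The idea is that this single ``bad'' member, together with the fact that both $X$ and $Y$ \emph{are} $t$-covers, forces every member of $\mathcal{F}$ to behave very rigidly relative to $W$, so rigidly that $\mathcal{F}$ becomes small enough to contradict $|\mathcal{F}|^{r+1}\geq\mathcal{N}(\Delta_{r+1,t},\mathcal{F})\geq n_{t+r,k}$ via the estimate of Lemma \ref{n r+1,t}.

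First I would record the structural consequences. Since $X,Y\in\mathcal{C}_t(\mathcal{F})$, every $F\in\mathcal{F}$ has $\dim(F\cap X)\geq t$ and $\dim(F\cap Y)\geq t$; combined with $\dim(X\cap Y)=t$ and $\dim W=t+2$, a short dimension count shows $\dim(F\cap W)\geq t+1$ for every $F\in\mathcal{F}$ unless $F\cap X=F\cap Y=X\cap Y$, which is impossible since that common intersection has dimension $t$ while $F$ must meet each of $X,Y$ in dimension $\geq t$ — so in fact either $\dim(F\cap W)\geq t+1$, and then $F$ contains a $(t+1)$-subspace of $W$, or $F\cap W$ is a $t$-subspace $T$ of $W$ lying in \emph{both} $X$ and $Y$, hence $T=X\cap Y$. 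Let $\mathcal{A}$ be the members of the first kind and $\mathcal{B}=\mathcal{F}\setminus\mathcal{A}$ the members of the second kind (those with $F\cap W=X\cap Y$). The key point is that $F_0$ witnessing ``$Z$ not a cover'' constrains things: if $F\in\mathcal{F}$ contains a $(t+1)$-subspace $Z'\subseteq W$ with $Z'\neq Z$, then one checks $\dim(Z\cap Z')=t$ (inside the $(t+2)$-space $W$), so using the covering property of $X$ and $Y$ one pins down which $(t+1)$-subspaces of $W$ can possibly sit inside members of $\mathcal{F}$; ruling out $Z$ leaves only $X$ and $Y$ themselves as options. Hence every $F\in\mathcal{A}$ contains $X$ or contains $Y$, and every $F\in\mathcal{B}$ contains $X\cap Y$. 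In all three cases $F$ contains a fixed $t$-subspace $X\cap Y$, or contains one of the two fixed $(t+1)$-subspaces $X,Y$.

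Now I would bound $|\mathcal{F}|$. Members containing $X$ number at most ${n-t-1\brack k-t-1}$, same for $Y$, and members containing the $t$-space $X\cap Y$ number at most ${n-t\brack k-t}$; but the family $\mathcal{B}$ of members with $F\cap W=X\cap Y$ is more restricted still — being $r$-wise $t$-intersecting with $\tau_t=t+1$ it cannot all be trivial, and one can use Lemma \ref{cross} (or Proposition \ref{size}) applied to $\mathcal{B}$, together with the fact that $\mathcal{B}$'s members avoid $W$ beyond $X\cap Y$, to get $|\mathcal{B}|\leq {k\brack t}{r+t-2\brack t}^{-1}{k\brack r+t-2}{n-k-1\brack k-r-t+1}$ or a comparable polynomial-in-${n\brack\cdot}$ bound of strictly smaller order than ${n-t-1\brack k-t-1}$. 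Summing, $|\mathcal{F}|\leq 2{n-t-1\brack k-t-1}+(\text{lower order})$, so $\log_q|\mathcal{F}|\leq (k-t-1)(n-k)+O(k^2r^2)$ roughly, whence $\log_q|\mathcal{F}|^{r+1}$ is comfortably below the lower bound $\log_q n_{t+r,k}\geq (r+1)(k-r-t+1)n-(k+1)(r+1)(k-r-t+1)+r(k+1)-r^2$ from Lemma \ref{n r+1,t}, once $n\geq 3kr^2+3krt$ — because the leading coefficient in $n$ drops from $(r+1)(k-t-1)$ (if all $F\supseteq X$, giving a trivial family with no simplices anyway) down to something with a strictly smaller multiple of $(k-r-t+1)$; more precisely the ``$2\times$'' only adds $\log_q 2$, not a factor of $n$, so the dominant term on the left is $(r+1)(k-t-1)n$, and one must check $(r+1)(k-t-1) \le (r+1)(k-r-t+1)$ fails — so the real win is that $\mathcal{F}$ being covered by $X$ \emph{and} $Y$ means it is essentially trivial (contained in $\mathcal{F}_{X,k}\cup\mathcal{F}_{Y,k}$ with the extra members forced into $\mathcal{B}$), and a family contained in the union of two trivial families has $\tau_t=t+1$ only with very few extra members, and critically contains at most $2$ ``centers'' worth of simplices. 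I would therefore instead argue directly that a family with $\mathcal{F}\subseteq\mathcal{F}_{X,t\text{-triv}}\cup\mathcal{F}_{Y,t\text{-triv}}\cup\mathcal{B}$ has $\mathcal{N}(\Delta_{r+1,t},\mathcal{F})$ bounded by the number of simplices in a union of two \emph{trivial} families plus a lower-order correction, and that this is $<n_{t+r,k}$ since simplices need their $(r+1)$-fold intersection below $t$, which cannot happen among $r+1$ members all containing a common $t$-space — so all $r+1$ members of any simplex would have to avoid being jointly in $\mathcal{F}_{X}$ or jointly in $\mathcal{F}_Y$, forcing some of them into $\mathcal{B}$, and $|\mathcal{B}|$ is too small.

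The main obstacle I anticipate is the last counting step: showing that ``$\mathcal{F}$ covered by the two fixed subspaces $X,Y$ plus the small residual $\mathcal{B}$'' yields \emph{strictly} fewer than $n_{t+r,k}$ simplices, rather than merely ``at most a constant times fewer.'' This requires a careful case analysis of how an $(r+1,t)$-simplex can distribute its members among the pieces $\{F\supseteq X\}$, $\{F\supseteq Y\}$, $\mathcal{B}$: a simplex cannot have all $r+1$ members containing $X$ (then $X\subseteq\bigcap F_i$, so $\dim\bigcap F_i\geq t+1>t-1$ wait $\geq t+1 \ge t$, contradicting the simplex condition), nor all containing $Y$; so at least one member lies outside $\{F\supseteq X\}$ and at least one outside $\{F\supseteq Y\}$, which — combined with the rigidity that non-$X$, non-$Y$ members of $\mathcal{A}$ don't exist, so such a member is in $\mathcal{B}$ — forces every simplex to use a member of $\mathcal{B}$, bounding the simplex count by $|\mathcal{B}|\cdot|\mathcal{F}|^{r}$, which by the size bounds is of order strictly below $n_{t+r,k}$ for $n\geq 3kr^2+3krt$. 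Assembling the inequalities with the explicit thresholds (invoking Lemma \ref{n r+1,t}, Lemma \ref{cross}, Proposition \ref{eq}) and confirming the stated bound on $n$ suffices is then routine but must be done with care to match the constant $3kr^2+3krt$; I would carry out that verification with the crude estimates of Proposition \ref{eq} exactly as in the proof of Lemma \ref{2tau}.
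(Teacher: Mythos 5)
Your proposal has a genuine gap at its central structural claim. You assert that once a single witness $F_0$ with $\dim(Z\cap F_0)\leq t-1$ exists, every $F\in\mathcal{F}$ with $\dim(F\cap \operatorname{span}\{X,Y\})\geq t+1$ must contain $X$ or $Y$, so that $\mathcal{F}$ decomposes as $\{F\supseteq X\}\cup\{F\supseteq Y\}\cup\mathcal{B}$ with $\mathcal{B}=\{F: F\cap W=X\cap Y\}$. No argument is given for this (``one checks \ldots one pins down \ldots'' is not a proof), and the claim is false: a member $F$ with $F\cap W=Z'$ for a $(t+1)$-subspace $Z'$ of $W$ other than $X,Y,Z$ is perfectly compatible with $X$ and $Y$ being $t$-covers and with the existence of $F_0$, and such members do occur in the extremal families $\mathcal{F}^*_{X_0,k}$. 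Worse, if your claim were true then every member of $\mathcal{F}$ would contain the $t$-space $X\cap Y$ (members of $\mathcal{B}$ contain it too), so $\mathcal{F}$ would be trivial with $\tau_t(\mathcal{F})=t$, contradicting the standing assumption $\tau_t(\mathcal{F})=t+1$ of this section; this over-strong consequence is a sign the decomposition cannot be right. The closing count also does not close: the only bound you actually have for $|\mathcal{B}|$ is $|\mathcal{B}|\leq|\mathcal{F}|$ via Proposition \ref{size}, and $|\mathcal{F}|^{r+1}$ with that bound is \emph{not} below $n_{t+r,k}$ (it cannot be, since the extremal family has $\tau_t=t+1$ and attains $n_{t+r,k}$); you would need $|\mathcal{B}|$ to be smaller than $|\mathcal{F}|$ by a quantifiable factor, which you never establish.

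The paper's proof is organized quite differently and you should note where the real work lies. It splits on $\dim(Z\cap X\cap Y)\in\{t,t-1\}$. When $X\cap Y\subseteq Z$, the lemma needs \emph{no} counting at all: a putative witness $F$ would have $\dim(F\cap(X\cap Y))=t-1$, and combining a vector of $(F\cap X)\setminus(X\cap Y)$ with one of $(F\cap Y)\setminus(X\cap Y)$ produces a vector of $F$ in $Z\setminus(X\cap Y)$, so $\dim(F\cap Z)\geq t$ after all — i.e.\ such $Z$ is automatically in $\mathcal{C}_t(\mathcal{F})$. When $\dim(Z\cap X\cap Y)=t-1$, a linear-algebra argument first rules out $\dim(F\cap X\cap Y\cap Z)=t-2$, and in the remaining subcase the witness $F$ is forced to satisfy $F\cap X=F\cap Y=X\cap Y$. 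Only then does counting enter, and the family that gets bounded is $\mathcal{F}'=\{F'\in\mathcal{F}: X\cap Y\not\subseteq F'\}$ (not your $\mathcal{B}$, whose members all \emph{contain} $X\cap Y$): each $F'\in\mathcal{F}'$ must contain a vector of $X\setminus(X\cap Y)$, a vector of $Y\setminus(X\cap Y)$, and an $(r+t-2)$-subspace of $F\cap F'$ avoiding both, where $F$ is the specific witness; this uses Lemma \ref{rt} and yields $|\mathcal{F}'|\leq{k\brack r+t-2}q^{2t}{n-r-t\brack k-r-t}$, after which $\mathcal{N}(\Delta_t,\mathcal{F})\leq|\mathcal{F}'||\mathcal{F}|^r<n_{t+r,k}$. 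Your proposal is missing both the case distinction that makes the first case counting-free and the exploitation of the witness's rigid position that makes the second case's count small enough.
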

\begin{proof}
	Since $\dim(\text{span}\{X, Y\}) = t+2$ and $\dim(X \cap Y) = t$, we have
	\begin{align*}
	    t \geq \dim(Z \cap X \cap Y) &\geq \dim(Z) + \dim(X \cap Y) - \dim(Z + (X \cap Y)) \\
        &\geq \dim(Z) + \dim(X \cap Y) - \dim(\text{span}\{X, Y\}) \\
        &= t-1.
	\end{align*}
    We now examine the possible values of $\dim(Z \cap X \cap Y)$.

\noindent\textbf{Case 1.} $\dim(Z \cap X \cap Y) = t$.
		
In this case, $X \cap Y \subseteq Z$. We claim that $Z \in \mathcal{C}_t(\mathcal{F})$.
			
Arbitrarily choose vectors $e_1 \in X \setminus (X \cap Y)$, $e_2 \in Y \setminus (X \cap Y)$, and $e_3 \in Z \setminus (X \cap Y)$. Then $X$, $Y$ and $Z$ can be written as $(X \cap Y) \oplus \langle e_1 \rangle$, $(X \cap Y) \oplus \langle e_2 \rangle$ and $(X \cap Y) \oplus \langle e_3 \rangle$, respectively. Hence $\text{span}\{X, Y\} = (X \cap Y) \oplus \langle e_1 \rangle \oplus \langle e_2 \rangle$. Since $e_3 \in Z \subseteq \text{span}\{X, Y\}$, we have $e_3 = ae_1 + be_2 + e^*$, where $e^* \in X \cap Y$ and $a\in \mathbb{F}_q\setminus\{0\}$ or $b \in \mathbb{F}_q\setminus\{0\}$. If $a=0$ or $b=0$, we have $Z=Y$ or $Z=X$, which shows that $Z \in \mathcal{C}_t(\mathcal{F})$. Therefore, we can assume that $a,b\in\mathbb{F}_q\setminus\{0\}$.
			
Suppose $Z \notin \mathcal{C}_t(\mathcal{F})$. Then there exists $F \in \mathcal{F}$ such that $\dim(F \cap Z) \leq t-1$. Consequently, $\dim(F \cap (X \cap Y)) \leq \dim(F \cap Z) \leq t-1$. Since $X, Y \in \mathcal{C}_t(\mathcal{F})$ and $\dim(F \cap (X \cap Y)) \geq t-1$, we must have $\dim(F \cap (X \cap Y)) = t-1$. Then there exist two vectors  $ce_1 + v_1\in (F\cap X) \setminus (X \cap Y)$ and $de_2 + v_2\in (F\cap Y) \setminus (X \cap Y)$, where $v_1, v_2 \in X \cap Y$ and $c, d \in \mathbb{F}_q \setminus \{0\}$. Thus, their linear combination $ac^{-1}(ce_1 + v_1) + bd^{-1}(de_2 + v_2) = e_3 + ac^{-1}v_1 + bd^{-1}v_2 \in F$, which is also in $Z \setminus (X \cap Y)$, implying $\dim(Z \cap F) \geq \dim(\text{span}\{F \cap (X \cap Y), e_3 + ac^{-1}v_1 + bd^{-1}v_2\}) \geq t$, a contradiction.

\noindent\textbf{Case 2.} $\dim(Z \cap X \cap Y) = t-1$.
				
Arbitrarily choose vectors $e_1 \in (X \cap Y )\setminus (X \cap Y \cap Z)$, $e_2 \in (X \cap Z) \setminus (X \cap Y \cap Z)$, and $e_3 \in (Y \cap Z) \setminus (X \cap Y \cap Z)$. Then $X$, $Y$, and $Z$ can be written as $(X \cap Y \cap Z) \oplus \langle e_1 \rangle \oplus \langle e_2 \rangle$, $(X \cap Y \cap Z) \oplus \langle e_1 \rangle \oplus \langle e_3 \rangle$ and $(X \cap Y \cap Z) \oplus \langle e_2 \rangle \oplus \langle e_3 \rangle$, respectively. Suppose $Z \notin \mathcal{C}_t(\mathcal{F})$. Then there exists $F \in \mathcal{F}$ such that $\dim(F \cap Z) \leq t-1$. Since $X,Y \in \mathcal{C}_t(\mathcal{F})$, we have $\dim(F \cap X \cap Y \cap Z) \geq t-2$.

\noindent\textbf{Subcase 2.1.} $\dim(F \cap X \cap Y \cap Z) = t-2$.

We will show the case can not occur indeed. We claim that $F\cap X\nsubseteq X\cap Z$. Otherwise, if $F\cap X\subseteq X\cap Z$, then we have $F\cap X= X\cap Z=F\cap X\cap Z$ by  $\dim(X\cap Z)= t$ and $\dim(F\cap X)\geq t$. Then  $\dim(F\cap Z)\geq\dim(F\cap X\cap Z)=\dim(F\cap X)\geq t$, a contradiction.

Therefore we can find a vector $v_1\in (F\cap X)\setminus (X\cap Z)$. Let $G$ be a subspace such that $F \cap X = (F \cap X \cap Y \cap Z) \oplus \langle v_1 \rangle \oplus G$. Since $\dim(F \cap X) \geq t$, we have $\dim(G) = \dim(F \cap X) - \dim((F \cap X \cap Y \cap Z) \oplus \langle v_1 \rangle) \geq 1$. There exist nonzero vector in $G$, denoted by $v_2$, such that $(F \cap X \cap Y \cap Z) \oplus \langle v_1 \rangle \oplus \langle v_2 \rangle \subseteq F \cap X$. Since $v_1,v_2\subseteq X\cap F$, and $v_1\notin X\cap Z$, we can assume $v_1= ae_1+be_2+v_3,v_2=ce_1+de_2+v_4$ where $v_3$, $v_4\in X\cap Y \cap Z$, $a\in \mathbb{F}_q\setminus\{0\},b,c,d \in \mathbb{F}_q$. Then $c(ae_1+be_2+v_3)-a(ce_1+de_2+v_4)=(bc-ad)e_2+cv_3-av_4\in F.$

We claim that $bc-ad\neq 0$. Otherwise $0=(cv_3-av_4)-c(ae_1+be_2+v_3)+a(ce_1+de_2+v_4)$. However, direct sum $(F\cap X\cap Y\cap Z)\oplus<v_1>\oplus<v_2>$ implies that $0$ can be only represented by $0+0+0$. While $a(ce_1+de_2+v_4)=av_2\neq 0$, a contradiction.
					
Using the same analysis for $F \cap Y$, we get $(b'c'-a'd')e_3+c'v'_3-a'v'_4\in F$ where $b'c'-a'd'\neq 0$. Finally,
$
	\dim(F\cap Z)\geq \dim(\text{span}\{F\cap X\cap Y\cap Z,(bc-ad)e_2+cv_3-av_4,(b'c'-a'd')e_3+c'v'_3-a'v'_4\})= t
$
by $bc-ad,b'c'-a'd'\neq 0$, a contradiction.
					
\noindent\textbf{Subcase 2.2.} $\dim(F\cap X\cap Y\cap Z)=t-1$.

                We claim that $X\cap Y \subseteq F$. Indeed, if $X\cap Y \nsubseteq F$, $t=\dim(X\cap Y)>\dim(F\cap X\cap Y)\geq \dim(F\cap X\cap Y\cap Z)=t-1$ which implies $\dim(F\cap X\cap Y)=t-1$. Since $X,Y\in \mathcal{C}_t(\mathcal{F})$, we can find  $v_1\in F\cap(X\setminus(X\cap Y))$, $v_2\in F\cap(Y\setminus(X\cap Y))$ and they can be expressed as $v_1=ae_1+be_2+v_3$ and $v_2=a'e_1+b'e_3+v_4$ where $v_3,v_4\in X\cap Y \cap Z$ and $a,a'\in \mathbb{F}_q$, $b,b'\in \mathbb{F}_q\setminus\{0\}$. Thus $a'v_1-av_2\in F$ and  $a'v_1-av_2\in Z\setminus(X\cap Y\cap Z)$, which indicates $\dim(Z\cap F)\geq \dim(\text{span}\{F\cap (X\cap Y\cap Z), a'v_1-av_2\})\geq t$, a contradiction.

                Since $\dim(F\cap X)= t$  (otherwise, by $X\in \mathcal{C}_t(\mathcal{F})$, we have $\dim(F\cap Z)\geq \dim(F\cap Z\cap X)\geq \dim(F\cap X)-1\geq t$, a contradiction), we have $X\cap F=X\cap Y$. Similarly, $X\cap F=Y\cap F=X\cap Y$. Let $\mathcal{F}'=\{F'\in \mathcal{F}|X\cap Y\nsubseteq F'\}$. Consider the cardinality of $\mathcal{F}'$. Since $X$, $Y\in \mathcal{C}_t(\mathcal{F})$ and $\dim(X\cap Y)=t$,  $\dim(F'\cap X) = \dim(F'\cap Y) =t$ for every $F'\in \mathcal{F}'$. So there exist vectors $x\in X\setminus (X\cap Y)$ and $y\in Y\setminus (X\cap Y)$ such that  $x,y\in F'$. From Lemma~\ref{rt}, $\dim(F\cap F')\geq r+t-2$.
                Since $X\cap F=Y\cap F=X\cap Y$, $x,y\notin F\cap F'$. Thus, for all $G\subseteq F\cap F'$ with $\dim(G)=r+t-2$ and $x\in X\setminus (X\cap Y),y\in Y\setminus (X\cap Y)$, we have $$\mathcal{F}'\subseteq \bigcup_{\substack{x\in X\setminus (X\cap Y),y\in Y\setminus (X\cap Y),G\in{F\cap F'\brack r+t-2}}}\left\{H\in{V\brack k}:\text{span}\{x,y,G\}\subseteq H\right\}$$ and
                \begin{align*}
                    \lvert\mathcal{F}'\rvert &\leq\sum_{G\in{F\cap F'\brack r+t-2}} \sum_{x\in X\setminus (X\cap Y)} \sum_{y\in Y\setminus (X\cap Y)}\left|\left\{H\in{V\brack k}:\text{span}\{x,y,G\}\subseteq H\right\}\right|\\
                    &\leq {k\brack r+t-2}\Big(q^t\Big)^2{n-r-t\brack k-r-t}.
                \end{align*}
                Observe that any $(r+1,t)$-simplex must contain at least one subspace in $\mathcal{F}'$. Thus
				\begin{align*}
					\log_q\mathcal{N}(\Delta_t,\mathcal{F}) &\leq\log_q(|\mathcal{F}'||\mathcal{F}|^r)\\
					&\leq\log_q\left({k\brack r+t-2}q^{2t}{n-r-t\brack k-r-t}\right)\\
                    &\quad+\log_q\left(\Big({k\brack t}{r+t-2\brack t}^{-1}{k\brack r+t-2}{n-k-1\brack k-r-t+1}\Big)^r\right).
                    \end{align*}
                   By Proposition \ref{eq}, we have
                    \begin{align*}
                                    \log_q\mathcal{N}(\Delta_t,\mathcal{F}) &\leq (r+t-2)(k-r-t+3)+2t+(k-r-t)(n-k+1)\\
                        &\quad+r(t(k-t+1)-t(r-2)+(r+t-2)(k-r-t+1))\\
            &\quad+r((k-r-t+1)(n-2k+r+t-1))\\
                        &\leq (r+1)(k-r-t+1)n-(k+1)(r+1)(k-r-t+1)+r(k+1)-r^2\\
                        &<\log_qn_{t+r,k}.
				\end{align*}
                The second-to-last inequality holds for $n\geq -2r^3+4r^2-3kr+kt-rk^2+5rt+k-t^2+5t-3rt^2-5r^2t+3kr^2+4krt-5$. For simplicity, we can assume $n \geq 3kr^2 + 3krt$. Specifically, when $r = 2$ and $t = 1$, the corresponding range is $n\geq -2k^2+15k-17$, which holds when $n \geq 2k+9$.
\end{proof}

\begin{lemma}\label{clique}
	Let $X$ be an $m$-dimensional cover-complete subspace of $V$ $($with $m \geq t+1)$ with respect to $\mathcal{F}$, $Y \in \mathcal{C}_t(\mathcal{F})$ with $\dim(X \cap Y) = t$, and $n \geq 3kr^2 + 3krt$  $($or $n\geq 2k+9$ for the case $r=2,t=1)$. For any $(t+1)$-subspace $Z \subseteq \text{span}\{X, Y\}$, at least one of the following holds:
	\begin{enumerate}
		\item $Z \in \mathcal{C}_t(\mathcal{F})$ $($i.e., $\text{span}\{X, Y\}$ is cover-complete$)$.
		\item $\mathcal{N}(\Delta_t, \mathcal{F}) < n_{t+r, k}$.
	\end{enumerate}
\end{lemma}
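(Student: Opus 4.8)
The plan is to induct on $m = \dim X$. When $m = t+1$, the subspace $X$ is its own only $(t+1)$-subspace, so $X \in \mathcal{C}_t(\mathcal{F})$ and the assertion is exactly Lemma \ref{P3} applied to $X$ and $Y$. So assume $m \geq t+2$; aiming at conclusion 1, suppose conclusion 2 fails, i.e.\ $\mathcal{N}(\Delta_{r+1,t}, \mathcal{F}) \geq n_{t+r,k}$, and let us show that $\text{span}\{X,Y\}$ is cover-complete. Put $W = X \cap Y$ (so $\dim W = t$) and fix $e_0 \in Y \setminus X$, so that $Y = W \oplus \langle e_0\rangle$ and $\text{span}\{X,Y\} = X \oplus \langle e_0\rangle$ has dimension $m+1$.

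The first step lowers the dimension of $X$. Choose a hyperplane $X^-$ of $X$ with $W \subseteq X^-$ (possible since $t \leq m-1$). Any $(t+1)$-subspace of $X^-$ is a $(t+1)$-subspace of $X$, hence lies in $\mathcal{C}_t(\mathcal{F})$; thus $X^-$ is cover-complete, of dimension $m-1 \geq t+1$, and $\dim(X^- \cap Y) = \dim W = t$. By the induction hypothesis applied to $(X^-, Y)$, and since conclusion 2 fails, the space $X^+ := \text{span}\{X^-, Y\} = X^- \oplus \langle e_0\rangle$ is cover-complete, of dimension $m$. As $e_0 \notin X$ we get $X \cap X^+ = X^-$, so $X$ and $X^+$ are two cover-complete subspaces of dimension $m$ meeting in the hyperplane $X^-$, with $\text{span}\{X, X^+\} = X \oplus \langle e_0\rangle = \text{span}\{X,Y\}$.

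It now suffices to prove the following merging statement: if $A, B$ are cover-complete with $\dim A = \dim B = d \geq t+1$ and $C := A \cap B$ has dimension $d-1$, then (still under the failure of conclusion 2) $\text{span}\{A, B\}$ is cover-complete. Write $A = C \oplus \langle a\rangle$ and $B = C \oplus \langle b\rangle$, so $\text{span}\{A,B\} = C \oplus \langle a\rangle \oplus \langle b\rangle$, and fix an arbitrary $(t+1)$-subspace $Z \subseteq \text{span}\{A,B\}$. The key is to produce $(t+1)$-subspaces $A_1 \subseteq A$ and $B_1 \subseteq B$ with $\dim(A_1 \cap B_1) = t$ and $Z \subseteq \text{span}\{A_1, B_1\}$: once this is done, $A_1, B_1 \in \mathcal{C}_t(\mathcal{F})$ (being $(t+1)$-subspaces of cover-complete spaces), so Lemma \ref{P3} together with the failure of conclusion 2 forces $\text{span}\{A_1, B_1\}$ to be cover-complete, whence $Z \in \mathcal{C}_t(\mathcal{F})$; since $Z$ is arbitrary, $\text{span}\{A,B\}$ is cover-complete. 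If $Z \subseteq A$ or $Z \subseteq B$ there is nothing to prove; otherwise the image $\bar Z$ of $Z$ in the $2$-dimensional quotient $\text{span}\{A,B\}/C$ is either a line $\langle \bar a + \gamma \bar b\rangle$ with $\gamma \in \mathbb{F}_q\setminus\{0\}$, in which case $\dim(Z \cap C) = t$, or the whole quotient, in which case $\dim(Z\cap C) = t-1$. In the first case, write $Z = (Z\cap C) \oplus \langle c' + a + \gamma b\rangle$ with $c' \in C$ and set $A_1 = (Z\cap C) + \langle a + c'\rangle$, $B_1 = (Z\cap C) + \langle b\rangle$. In the second case, choose $c_1 + a,\ c_2 + b \in Z$ with $c_1, c_2 \in C$ mapping to $\bar a$ and $\bar b$, pick any $t$-subspace $D$ with $Z\cap C \subseteq D \subseteq C$ (possible as $\dim C = d-1 \geq t$), and set $A_1 = D + \langle a + c_1\rangle$, $B_1 = D + \langle b + c_2\rangle$. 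In both cases a routine dimension count confirms $A_1 \subseteq A$, $B_1 \subseteq B$, $\dim A_1 = \dim B_1 = t+1$, $\dim(A_1 \cap B_1) = t$ and $Z \subseteq \text{span}\{A_1, B_1\}$. Applying this merging statement to $A = X$, $B = X^+$ closes the induction; every use of Lemma \ref{P3} and of the induction hypothesis needs only the standing bound $n \geq 3kr^2 + 3krt$ (respectively $n \geq 2k+9$ when $r=2$, $t=1$), so no new hypothesis on $n$ is introduced.

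The main obstacle is the merging statement, and specifically the observation that after the induction hypothesis converts a hyperplane of $X$ together with $Y$ into the same-dimensional cover-complete space $X^+$, the task reduces to gluing two cover-complete subspaces along a common hyperplane, which can be handled one $(t+1)$-subspace at a time via the explicit auxiliary pair $A_1, B_1$ and a single application of Lemma \ref{P3}; the small split on $\dim(Z \cap C) \in \{t-1,t\}$ in constructing $A_1, B_1$ is the only genuinely delicate point, the remaining verifications being straightforward dimension arithmetic.
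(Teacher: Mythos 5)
Your proof is correct, and it takes a genuinely different route from the paper's. The paper fixes a basis $\{e_1,\dots,e_m\}$ of $X$ adapted to $X\cap Y$, first proves a ``transport'' claim (if $\mathrm{span}\{U,e\}\in\mathcal{C}_t(\mathcal{F})$ for one $t$-subspace $U$ of $X$, then $\mathrm{span}\{W,e\}\in\mathcal{C}_t(\mathcal{F})$ for every $t$-subspace $W$ of $X$, established by walking a chain of $t$-subspaces each meeting the next in dimension $t-1$ and invoking Lemma \ref{P3} at each link), and then reaches an arbitrary $Z$ by rewriting $v_2=v_1+\sum a_ie_i$ and shifting one coordinate at a time, applying the claim at each of the $m-t$ steps. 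You instead induct on $m=\dim X$, with Lemma \ref{P3} as the base case, and reduce the inductive step to a clean ``merging'' principle: two equal-dimensional cover-complete subspaces meeting in a common hyperplane span a cover-complete subspace. Your construction of the auxiliary pair $A_1\subseteq A$, $B_1\subseteq B$ checks out in both subcases ($\dim(Z\cap C)\in\{t,t-1\}$): in each, $A_1\cap B_1\subseteq A\cap B=C$ forces $\dim(A_1\cap B_1)=t$, $Z\subseteq\mathrm{span}\{A_1,B_1\}$ holds by inspection, and a single application of Lemma \ref{P3} finishes. Your approach buys a more conceptual, reusable statement (gluing along a hyperplane) and avoids the coordinate-by-coordinate bookkeeping, at the cost of an extra layer of induction; the paper's approach is more explicit and self-contained within one pass. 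Both arguments rest entirely on Lemma \ref{P3}, so neither needs any hypothesis on $n$ beyond $n\geq 3kr^2+3krt$ (or $n\geq 2k+9$ when $r=2$, $t=1$).
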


\begin{proof}
	Assume $\mathcal{N}(\Delta_t, \mathcal{F}) \geq n_{t+r, k}$. We now aim to prove that $Z \in \mathcal{C}_t(\mathcal{F})$ holds for all $(t+1)$-subspaces of $\text{span}\{X, Y\}$. Each time when we apply Lemma \ref{P3}, we can infer that some $(t+1)$-subspaces of $\text{span}\{X, Y\}$ are in $\mathcal{C}_t(\mathcal{F})$. By repeatedly applying  Lemma \ref{P3}, we eventually conclude that all $(t+1)$-subspaces of $\text{span}\{X, Y\}$ are in $\mathcal{C}_t(\mathcal{F})$ and then complete the proof. A more detailed proof is as follows.

{\noindent\bf Claim.} Let $e$ be a vector in $\text{span}\{X, Y\} \setminus X$ and $U$ be a $t$-subspace of $X$. If $\text{span}\{U, e\} \in \mathcal{C}_t(\mathcal{F})$, then $\text{span}\{W, e\} \in \mathcal{C}_t(\mathcal{F})$ for any $W \in {X \brack t}$.

 {\noindent\bf Proof of Claim.} Given $W \in {X \brack t}$ and $W\neq U$, let the basis of $U\cap W$ be $\{e_1,e_2,\ldots,e_a\}$, where $0\leq a\leq t-1$ and $a=0$ means $\dim(U\cap W)=0$. Assume $\{e_1,e_2,\ldots e_a,e_{a+1},\ldots, e_t\}$ and $\{e_1,e_2,\ldots,e_a,e'_{a+1},e'_{a+2},\ldots, e'_t\}$ are the bases of $U$ and $W$
 respectively. Let $Z_i=\text{span}\{e_1,e_2,\ldots,e_{t-i},e'_{t-i+1},e'_{t-i+2},\ldots, e'_t\}$ for $0\leq i\leq t-a$ (in fact, $Z_0=U$ and $Z_{t-a}=W$). Noticing that $\{e_1,e_2,\ldots,e_{t},e'_{a+1},e'_{a+2},\ldots, e'_t\}$ are linearly independent, we have $\dim(U \cap Z_1) = \dim(Z_i \cap Z_{i+1}) = \dim(W \cap Z_{t-a-1}) = t-1$ for any $0\leq i\leq t-a-1$. Since $X$ is an $m$-dimensional cover-complete subspace, we have $\text{span}\{U, Z_1\}\in \mathcal{C}_t(\mathcal{F})$ and $\text{span}\{U, e\}\in \mathcal{C}_t(\mathcal{F})$. By Lemma \ref{P3}, we conclude that $\text{span}\{U, Z_1, e\}$ is cover-complete. Thus, $\text{span}\{Z_1, e\} \in \mathcal{C}_t(\mathcal{F})$. Using the same analysis, since $\text{span}\{Z_1,Z_2\}$ and $\text{span}\{Z_1, e\}$ are in $\mathcal{C}_t(\mathcal{F})$, by Lemma \ref{P3}, we conclude that $\text{span}\{Z_2, e\}$ is cover-complete. Thus, $\text{span}\{Z_2, e\} \in \mathcal{C}_t(\mathcal{F})$. By repeating this process, we obtain that $\text{span}\{Z_i, e\}$ for any $0\leq i\leq t-a$, which concludes the proof of this claim.\q

It remains to show that $Z \in \mathcal{C}_t(\mathcal{F})$ for any $Z \in { \text{span}\{X, Y\} \brack t+1}$.

If $Z\in {X\brack t+1}$, the result holds by $X$ being cover-complete. Hence, we assume $\dim(Z\cap X)=t$. Notice that $Z\in \mathcal{C}_t(\mathcal{F})$ if $Z\subseteq X$ or $Z=Y$. Therefore we only need to focus on the case that $Z\nsubseteq X$ and $Z\neq Y$. Let $v_1 \in Y \setminus (X \cap Y)$ and $v_2 \in Z \setminus (X \cap Z)$. Let $\{e_1, e_2, \dots, e_t\}$ be a basis of $X \cap Y$, which can be extended to a basis of $X$, denoted by $\{e_1, e_2, \dots, e_m\}$. Then $v_2$ can be written as $v_2 = v_1 + \sum_{i=1}^m a_i e_i$, where $ a_i \in \mathbb{F}_q$. Since $\text{span}\{e_1, e_2, \dots, e_t, v_1\} = Y \in \mathcal{C}_t(\mathcal{F})$, and $v_1 + \sum_{i=1}^t a_i e_i \in \text{span}\{e_1, e_2, \dots, e_t, v_1\},$ we have
$$\text{span}\left\{e_1, e_2, \dots, e_{t-1}, e_{t+1}, v_1 + \sum_{i=1}^t a_i e_i\right\} \in \mathcal{C}_t(\mathcal{F})$$
by the claim above (where $U=\text{span}\{e_1, e_2, \dots, e_t\}$, $W=\text{span}\{e_1, e_2, \dots, e_{t-1}, e_{t+1}\}$ and $e=v_1 +\sum_{i=1}^t a_i e_i$).

Using the same analysis and $$\text{span}\{e_1, e_2, \dots, e_{t-1}, e_{t+1}, v_1 + \sum_{i=1}^{t+1} a_i e_i\}=\text{span}\{e_1, e_2, \dots, e_t, v_1 + \sum_{i=1}^{t} a_i e_i\} \in \mathcal{C}_t(\mathcal{F}),$$ we have
$$\text{span}\left\{e_1, e_2, \dots, e_{t-1}, e_{t+2}, v_1 + \sum_{i=1}^{t+1} a_i e_i\right\} \in \mathcal{C}_t(\mathcal{F})$$
by the claim above (where $U=\text{span}\{e_1, e_2, \dots, e_{t-1}, e_{t+1}\}$, $W=\text{span}\{e_1, e_2, \dots, e_{t-1}, e_{t+2}\}$ and $e=v_1 +\sum_{i=1}^{t+1} a_i e_i$). Repeating this argument, we obtain
$$\text{span}\{e_1, e_2, \dots, e_{t-1}, e_m, v_2\}=\text{span}\left\{e_1, e_2, \dots, e_{t-1}, e_m, v_1 + \sum_{i=1}^{m-1} a_i e_i\right\} \in \mathcal{C}_t(\mathcal{F}).$$
Thus, by the claim above (set $U=\text{span}\{e_1, e_2, \dots, e_{t-1}, e_{m}\}$, $W=Z\cap X$ and $e=e_2$), we conclude that $Z \in \mathcal{C}_t(\mathcal{F})$.
\end{proof}

We call a subspace $X$ \textit{maximal cover-complete} if any cover-complete subspace containing $X$ must be equal to $X$. Lemma \ref{clique} actually describes the conditions under which a cover-complete subspace $X$ cannot be maximal. Next, we use the following two lemmas to prove that the maximal cover-complete subspace is of dimension $r+t$.

\begin{lemma}\label{ntbig}
	Let $X$ be a cover-complete subspace with $\dim(X) = m \geq t + r + 1$, and $n \geq 3kr^2 + 3krt$  $($or $n\geq 2k+9$ for the case $r=2,t=1)$. Then, $\mathcal{N}(\Delta_t, \mathcal{F}) < n_{t+r, k}$.
\end{lemma}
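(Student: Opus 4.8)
The plan is to observe that the hypothesis $\dim(X)=m\ge t+r+1$ is already strong enough to destroy \emph{every} simplex: I will argue that under it $\mathcal{F}$ contains no $(r+1,t)$-simplex at all, so that $\mathcal{N}(\Delta_{r+1,t},\mathcal{F})=0$, after which it only remains to note that $n_{t+r,k}>0$.

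The first step is to invoke Proposition \ref{dim}: since $X$ is cover-complete, every $F\in\mathcal{F}$ satisfies $\dim(F\cap X)\ge m-1$, that is, $F\cap X$ has codimension at most $1$ inside $X$. The only other ingredient is subadditivity of codimension in $X$: applying the inequality $\dim(A\cap B)\ge\dim A+\dim B-\dim X$ (valid for subspaces $A,B$ of $X$) a total of $r$ times shows that, for any $F_1,\dots,F_{r+1}\in\mathcal{F}$, the subspace $\bigcap_{i=1}^{r+1}(F_i\cap X)=\big(\bigcap_{i=1}^{r+1}F_i\big)\cap X$ has codimension at most $r+1$ in $X$. Therefore
\[
\dim\Big(\bigcap_{i=1}^{r+1}F_i\Big)\ \ge\ \dim\Big(\bigcap_{i=1}^{r+1}(F_i\cap X)\Big)\ \ge\ m-(r+1)\ \ge\ t,
\]
where the last inequality is precisely $m\ge t+r+1$. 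Hence no $(r+1)$-subset of $\mathcal{F}$ has intersection of dimension strictly less than $t$, so indeed $\mathcal{N}(\Delta_{r+1,t},\mathcal{F})=0$.

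To conclude, I would record that $n_{t+r,k}>0$. This is immediate: $n_{t+r,k}=\mathcal{N}(\Delta_{r+1,t},\mathcal{F}_{X,k})$ for a $(t+r)$-subspace $X$, and since $n\ge 3kr^2+3krt>2k$ we may apply Lemma \ref{n r+1,t}, whose lower bound is strictly positive because ${n-r-t\brack k-r-t+1}\ge 1$ (as $n\ge k+1$) and $\prod_{i=0}^{r}{t+i\brack t+i-1}\ge 1$. Combining the two parts gives $\mathcal{N}(\Delta_{r+1,t},\mathcal{F})=0<n_{t+r,k}$, as claimed.

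I do not anticipate a genuine obstacle: the lemma is essentially a one-line corollary of Proposition \ref{dim}. The only points warranting a moment's care are the degenerate ranges of $m$ — when $m=k+1$ the condition $\dim(F\cap X)\ge m-1$ forces $\mathcal{F}\subseteq{X\brack k}$, and when $m>k+1$ it forces $\mathcal{F}=\emptyset$ — but both are swallowed by the same codimension count, since $m\ge t+r+1$ by itself already guarantees $m-(r+1)\ge t$ regardless of how $m$ compares with $k$.
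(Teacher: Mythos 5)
Your proof is correct, and it takes a genuinely different --- and considerably shorter --- route than the paper's. You observe that Proposition \ref{dim} forces every $F_i\cap X$ to have codimension at most $1$ in $X$, so the intersection of any $r+1$ members of $\mathcal{F}$ contains a subspace of $X$ of dimension at least $m-(r+1)\ge t$; hence $\mathcal{N}(\Delta_{r+1,t},\mathcal{F})=0$, and the strict inequality follows from $n_{t+r,k}>0$, which Lemma \ref{n r+1,t} supplies. The paper instead runs its standard counting scheme: it fixes $Y\in{X\brack t}$, notes that every simplex must contain a member of $\mathcal{F}'=\{F\in\mathcal{F}:Y\not\subseteq F\}$, bounds $|\mathcal{F}'|\le{t\brack t-1}q^{t(m-t)}{n-m+1\brack k-m+1}$ using cover-completeness, bounds $|\mathcal{F}|$ by Proposition \ref{size}, and then compares $|\mathcal{F}'|\,|\mathcal{F}|^r$ logarithmically with the lower bound on $n_{t+r,k}$ --- which is where the hypothesis $n\ge 3kr^2+3krt$ is actually consumed. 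Your argument is strictly stronger (it shows the simplex count is zero, not merely below $n_{t+r,k}$), needs no hypothesis on $n$ beyond positivity of $n_{t+r,k}$, and is essentially the same codimension count the paper itself performs at the end of Section 3 in the boundary case $m=r+t$, where intersecting only $r$ of the $r+1$ subspaces with $X$ already gives dimension $\ge m-r=t$. What the paper's heavier route buys is only uniformity with the method of Lemmas \ref{2tau} and \ref{ntsmall}, where no such outright collapse of the simplex count is available; your handling of the degenerate ranges $m\ge k+1$ is also fine, since the codimension bound is insensitive to how $m$ compares with $k$.
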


\begin{proof}
	Let $Y \in {X \brack t}$. Define $\mathcal{F}' = \{ F \in \mathcal{F} \mid Y \not\subseteq F \}$. The intersection of $Y$ and any $F \in \mathcal{F}'$ is a $(t-1)$-subspace (otherwise, any $(t+1)$-subspace of $X$ containing $Y$ cannot cover $F$, which leads to a contradiction).

	Consider the cardinality of $\mathcal{F}'$. Fixing $F \cap Y$, let the basis of $F \cap Y$ be $\{e_1, e_2, \dots, e_{t-1}\}$, and extend it to the basis of $Y$ and $X$ as $\{e_1, e_2, \dots, e_t\}$ and $\{e_1, e_2, \dots, e_m\}$, respectively. Since $\text{span}\{e_1, e_2, \dots, e_t, e_i\} \in \mathcal{C}_t(\mathcal{F})$, we can find a vector $e_i + v_i \in F$ where $v_i \in Y$ for every $t+1 \leq i \leq m$. There are ${t \brack t-1}$ possible choices for $F \cap Y$, and $q^t$ distinct vectors $e_i + v_i$.
	Thus we have
	\(
	|\mathcal{F}'| \leq {t \brack t-1} \Big(q^t\Big)^{m-t} {n-m+1 \brack k-m+1}.
	\)
	Next, we estimate $\mathcal{N}(\Delta_t, \mathcal{F})$.
	\begin{align*}
		\log_q\mathcal{N}(\Delta_t, \mathcal{F}) &\leq \log_q(|\mathcal{F}'| |\mathcal{F}|^r) \\
		&\leq \log_q\left({t \brack t-1} q^{t(m-t)} {n-m+1 \brack k-m+1}\right) \\
            &\quad+\log_q\left( {k \brack t} {r+t-2 \brack t}^{-1} {k \brack r+t-2} {n-k-1 \brack k-r-t+1} \right)^r.
            \end{align*}
            By Proposition \ref{eq}, we have
            \begin{align*}
            \log_q\mathcal{N}(\Delta_t, \mathcal{F})&\leq2(t-1)+t(m-t)+(k-m+1)(n-k+1)\\
            &\quad+r(t(k-t+1)-t(r-2)+(r+t-2)(k-r-t+1))\\
            &\quad+r((k-r-t+1)(n-2k+r+t-1))\\
            &\leq 2(t-1)+t(r+1)+(k-t-r)(n-k+1)\\
            &\quad+r(t(k-t+1)-t(r-2)+(r+t-2)(k-r-t+1))\\
            &\quad+r((k-r-t+1)(n-2k+r+t-1))\\
            &\leq (r+1)(k-r-t+1)n-(k+1)(r+1)(k-r-t+1)+r(k+1)-r^2\\
		&<\log_qn_{t+r, k}.
	\end{align*}
        The second-to-last inequality holds for $n\geq 2t+8tr-4kr-4r-k^2r+3kr^2-3t^2r-5tr^2+4ktr+2k-2r^3+5r^2-1$. For simplicity, we can assume $n \geq 3kr^2 + 3krt$. Specifically, when $r = 2$ and $t = 1$, the corresponding range is $n\geq-2k^2+14k-8$, which holds when $n \geq 2k+9$.
\end{proof}
\begin{lemma}\label{ntsmall}
	Let $X$ be a maximal cover-complete subspace with $ \dim(X) = m \leq t + r - 1$, and $n \geq 3kr^2 + 3krt$  $($or $n\geq 2k+9$ for the case $r=2,t=1)$. Then $\mathcal{N}(\Delta_t, \mathcal{F}) < n_{t+r, k}$.
\end{lemma}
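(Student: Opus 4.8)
The plan is to show that when $X$ is a maximal cover-complete subspace of dimension $m \le t+r-1$, the family $\mathcal{F}$ must be very small — so small that $|\mathcal{F}|^{r+1}$ (a trivial upper bound for the number of $(r+1,t)$-simplices) falls below the lower bound for $n_{t+r,k}$ given in Lemma \ref{n r+1,t}. The point is that maximality of $X$ forces a strong structural restriction: by Lemma \ref{clique}, if there were any $Y \in \mathcal{C}_t(\mathcal{F})$ with $\dim(X\cap Y) = t$ and $Y \not\subseteq X$, then $\mathrm{span}\{X,Y\}$ would be cover-complete and strictly larger than $X$ (unless $\mathcal{N}(\Delta_t,\mathcal{F}) < n_{t+r,k}$, in which case we are done). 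So, assuming $\mathcal{N}(\Delta_t,\mathcal{F}) \ge n_{t+r,k}$, every $t$-cover $Y \in \mathcal{C}_t(\mathcal{F})$ of minimum dimension $t+1$ satisfies $\dim(X \cap Y) \le t-1$.

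The key step is then to bound $|\mathcal{F}|$ using this restriction together with the fact that $\mathcal{F}$ is $(r+t-2)$-intersecting (Lemma \ref{rt}, since $\tau_t(\mathcal{F}) = t+1$). First I would extract, for each $F \in \mathcal{F}$, the information coming from $X$: since $X$ is cover-complete, Proposition \ref{dim} gives $\dim(F \cap X) \ge m-1$. Now fix any $F_0 \in \mathcal{F}$ and any $(t+1)$-subspace $T \subseteq X$ (which is a minimum $t$-cover, hence $T \in \mathcal{C}_t(\mathcal{F})$ since $X$ is cover-complete); every $F \in \mathcal{F}$ meets $T$ in dimension $\ge t$, and also meets $F_0$ in dimension $\ge r+t-2$. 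I would count $\mathcal{F}$ by the pair $(F\cap X,\ F\cap F_0)$: the number of possibilities for $F \cap X$ is at most ${m \brack m-1} + 1$ (a hyperplane of $X$ or all of $X$), hence a constant depending only on $m \le t+r-1$; and for each such choice, I would use that $F$ contains a $(t+1)$-subspace lying in $X$ plus an $(r+t-2)$-subspace inside $F \cap F_0$ — essentially the argument in Lemma \ref{cross} or in Subcase 2.2 of Lemma \ref{P3} — to bound the number of $F$ extending this data by roughly ${k \brack r+t-2}\, q^{O(1)}\, {n - (r+t-1) \brack k - (r+t-1)}$. Multiplying through, $|\mathcal{F}| \le C(r,t,k)\, {n-m+1 \brack k-m+1}$ for a constant $C$ not depending on $n$, which after taking $(r+1)$-th powers gives
\[
\log_q \mathcal{N}(\Delta_t,\mathcal{F}) \le (r+1)\log_q|\mathcal{F}| \le (r+1)(k-m+1)(n-k) + O_{r,t,k}(1).
\]
Since $m \le t+r-1$ forces $k-m+1 \ge k-r-t+2 > k-r-t+1$, the leading coefficient in $n$ strictly exceeds that of $\log_q n_{t+r,k}$ from Lemma \ref{n r+1,t} — wait, that goes the wrong way, so I instead need $m \ge t+1$ (which holds since $X$ is cover-complete) combined with a genuine gain: the correct observation is that maximality rules out the cover-complete structure of dimension exactly $t+r$, so $\mathcal{F}$ cannot contain the full family $\mathcal{F}_{X',k}$, and the $(r+t-2)$-intersecting constraint then caps $|\mathcal{F}|$ at order ${n-k-1 \brack k-r-t+1} \cdot q^{O(1)}$ — strictly the same order as in Proposition \ref{size} but with the extra hyperplane-in-$X$ saving, so that $|\mathcal{F}|^{r+1}$ loses a factor of $q^{\Theta(n)}$ against $n_{t+r,k}$ for $n$ large.

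The main obstacle is getting the bookkeeping in the counting step to produce a leading $n$-exponent strictly smaller than $(r+1)(k-r-t+1)$, the exponent appearing in $\log_q n_{t+r,k}$. The honest way to do this is exactly as in Lemma \ref{ntbig} and Subcase 2.2 above: one $F_0 \in \mathcal{F}$ is fixed, and every other $F$ is forced to contain a large subspace of $F_0$ (of dimension $r+t-2$ by Lemma \ref{rt}) together with $(t+1)$-subspaces of $X$ that it must cover — the fact that $F \cap X$ is constrained to a hyperplane of $X$ (Proposition \ref{dim}) means $F$ is "pinned" in roughly $m-1 \ge t$ extra coordinates beyond those of $F_0 \cap F$, so the count becomes $|\mathcal{F}| \le q^{O_{r,t,k}(1)} {n-m+1 \brack k-m+1}$ with $m \ge t+1$, giving $(r+1)(k-m+1) \le (r+1)(k-t)$; I then verify numerically, as the other lemmas do, that the resulting polynomial inequality in $n$ holds for $n \ge 3kr^2 + 3krt$ (resp. $n \ge 2k+9$ when $r=2,t=1$), so that $\mathcal{N}(\Delta_t,\mathcal{F}) \le |\mathcal{F}|^{r+1} < n_{t+r,k}$. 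The delicate point throughout is ensuring the constant $C(r,t,k)$ genuinely does not grow with $n$ and that the exponent comparison is strict; this is routine but must be done carefully, tracking the $q^t$ factors from Lemma \ref{1} exactly as in the proofs of Lemmas \ref{ntbig} and \ref{P3}.
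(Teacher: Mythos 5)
There is a genuine gap: your central counting strategy, bounding $\mathcal{N}(\Delta_t,\mathcal{F})\le|\mathcal{F}|^{r+1}$ and comparing with $n_{t+r,k}$, cannot work in this regime. Since $\tau_t(\mathcal{F})=t+1$ here, the best available size bound is the one in Proposition \ref{size}, whose leading exponent in $n$ is $(k-r-t+1)n$; raising it to the $(r+1)$-st power gives leading exponent $(r+1)(k-r-t+1)n$, which is \emph{exactly} the leading exponent of $\log_q n_{t+r,k}$ from Lemma \ref{n r+1,t}. So your claim that ``$|\mathcal{F}|^{r+1}$ loses a factor of $q^{\Theta(n)}$ against $n_{t+r,k}$'' is false as stated: the two quantities agree to leading order (indeed the near-extremal families $\mathcal{F}^*_{X',k}$ with $\dim X'=r+t$ have $\tau_t=t+1$ and $|\mathcal{F}^*_{X',k}|^{r+1}$ comparable to $n_{t+r,k}$ up to constants), and the comparison of the $n$-free terms is delicate and, for small $k$, goes the wrong way. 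You correctly notice that your first bound $|\mathcal{F}|\le C\,{n-m+1\brack k-m+1}$ has leading exponent $(r+1)(k-m+1)n\ge(r+1)(k-r-t+2)n$, which exceeds that of $n_{t+r,k}$, but the fallback you offer is an assertion, not an argument, and the ``extra hyperplane-in-$X$ saving'' is never quantified.

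The missing idea is to avoid $|\mathcal{F}|^{r+1}$ altogether. The paper fixes $Y\in{X\brack t}$ and observes that every $(r+1,t)$-simplex must contain at least one member of $\mathcal{F}_Y=\{F\in\mathcal{F}:Y\not\subseteq F\}$ (otherwise all $r+1$ members contain $Y$ and their intersection has dimension $\ge t$), so $\mathcal{N}(\Delta_t,\mathcal{F})\le|\mathcal{F}_Y|\,|\mathcal{F}|^r$; the whole burden is then to show $|\mathcal{F}_Y|$ is smaller than $|\mathcal{F}|$ by roughly a factor $q^{n}$, i.e.\ of order ${n-r-t-1\brack k-r-t}$ rather than ${n-k-1\brack k-r-t+1}$. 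This is done by a dichotomy on the projected family $\mathcal{F}'_Y=\{F(X'):F\in\mathcal{F}_Y\}$: if $\tau_1(\mathcal{F}'_Y)=1$, a common $1$-subspace $\langle x\rangle$ yields a new minimum cover $\mathrm{span}\{x,Y\}$ meeting $X$ in dimension $t$, and Lemma \ref{clique} plus maximality of $X$ forces the conclusion; if $\tau_1(\mathcal{F}'_Y)\ge 2$ for every $Y$, one decomposes $\mathcal{F}_Y$ over the hyperplanes $Z$ of $X$ and uses the $(r+t-2)$-intersecting property together with the non-triviality of $\mathcal{F}'_Y$ to pin each member to a $1$-subspace and an $(r+t-m)$-subspace of two reference projections, yielding the required bound. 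Your proposal contains neither the ``every simplex meets $\mathcal{F}_Y$'' reduction nor the $\tau_1(\mathcal{F}'_Y)$ dichotomy, and without them the stated numerical target is out of reach.
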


\begin{proof} Since $X$ is  cover-complete, $m\ge t+1$.
	 Fix a $X'$ such that $X\oplus X'=V$, and let $(F\cap X,F(X'))$ be the projection of $F\in \mathcal{F}$ onto $(X,X')$. For  $Y \in {X \brack t}$, let $\mathcal{F}_Y = \{ F \in \mathcal{F} : Y \not\subseteq F \}$ and  $\mathcal{F}'_Y = \{F(X'): F\in\mathcal{F}_Y\}$. Then $\mathcal{F}_Y$ is a $t$-intersecting family and $\mathcal{F}'_Y$ is $(k-m+1)$-uniform.

\noindent\textbf{Case 1.} There is $Y \in {X \brack t}$ such that $\tau_1(\mathcal{F}'_Y)=1$.

		Fix a $F \in \mathcal{F}_Y$. Let the basis of $Y$ be $\{e_1, e_2, \dots, e_t\}$, and extend it to the basis of $X$ as $\{e_1, e_2, \dots, e_m\}$. Notice that $\text{span}\{Y, e_i\} \in \mathcal{C}_t(\mathcal{F})$ for all $t + 1 \leq i \leq m$ by $X$ being  cover-complete. Then $\dim(Y \cap F) = t - 1$ and there exists $e_i + v_i \in F$, where $v_i \in Y$ for $t + 1 \leq i \leq m$. Since $\tau_1(\mathcal{F}'_Y)=1$, there exists $x\in X'\setminus\{0\}$ such that $x \in G$ for all $G\in \mathcal{F}'_Y$. Then, there exists a vector $x + v + \sum_{i = t + 1}^m a_i e_i$ where $v \in Y$ and $a_i \in \mathbb{F}_q$. Thus, $$x + v + \sum_{i = t + 1}^m a_i e_i - a_i (e_i + v_i) = x + v - \sum_{i = t + 1}^m a_i v_i \in F.$$
We have $\dim(F \cap \text{span}\{x, Y\}) \geq \dim(\text{span}\{F \cap Y, x + v - \sum_{i = t + 1}^m a_i v_i\}) = t$. Since $x$ is independent of the choice of $F$, we conclude that $\text{span}\{x, Y\} \in \mathcal{C}_t(\mathcal{F})$. By Lemma \ref{clique}, either $\text{span}\{X, x\}$ is cover-complete, or $\mathcal{N}(\Delta_t, \mathcal{F}) < n_{t+r, k}$. Since $X$ is maximal, we conclude that $\mathcal{N}(\Delta_t, \mathcal{F}) < n_{t+r, k}$.

\noindent\textbf{Case 2.} $\tau_1(\mathcal{F}'_Y)\ge 2$  for any $Y \in {X \brack t}$.

        Given  $Z \in {X \brack m-1}$, let $\mathcal{H}_Z = \{ H \in \mathcal{F} : X \cap H = Z \}$ and $\mathcal{H}'_Z = \{H(X'):H\in\mathcal{H}_Z\}$. Choose $Y\in {Z \brack t}$ and let $F'\in \mathcal{F}_Y$. Noticing that $\dim(Z\cap F'\cap X)=m-2$. By Lemma \ref{rt}, $\mathcal{F}$ is $(r+t-2)$-intersecting. So we have  $\dim(H(X')\cap F'(X'))\geq r+t-m\geq 1$ for any $H\in \mathcal{H}_Z$. Thus there must be a $1$-subspace in $H(X')\cap F'(X')$ for any $H\in \mathcal{H}_Z$. For any $1$-subspace $<v>$ of $F'(X')$, there exists $F_{<v>}\in \mathcal{F}_Y$ such that $v\notin F'_{<v>}(X')$ by $\tau_1(\mathcal{F}'_Y)\ge 2$. We have $\dim(H(X')\cap F'_{<v>}(X'))\geq r+t-m$ for any $H\in \mathcal{H}_Z$. Hence
        \begin{align*}
        	|\mathcal{H}'_Z|&\leq \left|\bigcup_{<v>\in {F'(X')\brack 1}}\left\{J\in {X'\brack k-m+1}:<v>\in J,\dim(J\cap F'_{<v>}(X'))\geq r+t-m\right\}\right|\\
        	&\leq {k-m+1\brack 1}{k-m+1\brack r+t-m}{n-r-t-1\brack k-r-t}.
        \end{align*}
        	Since $\mathcal{F}_Y \subseteq \bigcup\limits_{Z} \mathcal{H}_Z$, we obtain
        \[
        |\mathcal{F}_Y| \leq \sum\limits_{Z} |\mathcal{H}_Z| \leq {m \brack m - 1} {k-m+1\brack 1}{k-m+1\brack r+t-m}{n-r-t-1\brack k-r-t}.
        \]
        Any $(r+1,t)$-simplex in $\mathcal{F}$ must contain at least one element in $\mathcal{F}_Y$. Therefore, by Proposition~\ref{eq}, we have
        \begin{align*}
        	\log_\mathcal{N}(\Delta_t, \mathcal{F}) &\leq \log_q\left(|\mathcal{F}_Y| |\mathcal{F}|^r\right) \\
        	&\leq \log_q\left({m \brack m - 1} {k-m+1\brack 1}{k-m+1\brack r+t-m}{n-r-t-1\brack k-r-t}\right) \\
        	&\quad+\log_q\left(\Big({k\brack t}{r+t-2\brack t}^{-1}{k\brack r+t-2}{n-k-1\brack k-r-t+1}\Big)^r\right)\\
        	&\leq2(m-1)+(k-m+1)+(r+t-m)(k-r-t+2)+(k-r-t)(n-k)\\
        	&\quad+r(t(k-t+1)-t(r-2)+(r+t-2)(k-r-t+1))\\
        	&\quad+r((k-r-t+1)(n-2k+r+t-1))\\
        	&\leq 2t+(k-t)+(r-1)(k-r-t+2)+(k-r-t)(n-k)\\
        	&\quad+r(t(k-t+1)-t(r-2)+(r+t-2)(k-r-t+1))\\
        	&\quad+r((k-r-t+1)(n-2k+r+t-1))\\
        	&\leq (r+1)(k-r-t+1)n-(k+1)(r+1)(k-r-t+1)+r(k+1)-r^2\\
        	&<\log_qn_{t+r, k}.
        \end{align*}
        The second-to-last inequality holds for $n\geq -2r^3+3kr^2+4r^2-3rt^2+4krt+6rt-rk^2-3kr-r-5tr^2+t+2k-1$. For simplicity, we can assume $n \geq 3kr^2 + 3krt$.
        Specifically, when $r = 2$ and $t = 1$, the corresponding range is $n\geq-3k^2+21k-22$, which holds when $n \geq 2k+9$.
\end{proof}
By Lemmas \ref{ntbig} and  \ref{ntsmall},  we obtained that unless
any maximal cover-complete subspace of $\mathcal{F}$ is $(r + t)$-dimensional, the number of $(r+1,t)$-simplices of $\mathcal{F}$ is strictly smaller than
$ n_{t + r, k}$. Let $X$ be an $(r + t)$-dimensional maximal cover-complete subspace of $\mathcal{F}$. Then we have $\mathcal{F} \subseteq \mathcal{F}_{X, k}$ by Proposition \ref{dim}. We claim that the number of $ (r+1,t)$-simplices in $\mathcal{F}^*_{X, k}$ equals to its in $\mathcal{F}_{X, k}$. Indeed,  for any $(r+1,t)$-simplex $\{F_1,F_2,\ldots,F_{r+1}\}\subseteq \mathcal{F}_{X, k}$, we have $\dim(\bigcap_{i=1}^{r+1}(X\cap F_i))\leq \dim(\bigcap_{i=1}^{r+1}F_i)< t$. Suppose $\{F_1,F_2,\ldots,F_{r+1}\}\nsubseteq \mathcal{F}^*_{X, k}$,  say $F_1\notin \mathcal{F}^*_{X, k}$. Then $X\cap F_1=X$. Noticing that $\dim(F_i\cap X)\geq m-1$, we have $\dim(\bigcap_{i=2}^{r+1}(X\cap F_i))\geq m-r=t$. Hence, $\dim(\bigcap_{i=1}^{r+1}(X\cap F_i))=\dim(\bigcap_{i=2}^{r+1}(X\cap F_i))\geq m-r=t$, a contradiction. Therefore, for any simplex $(r+1,t)$-simplex $\{F_1,F_2,\ldots,F_{r+1}\}\subseteq \mathcal{F}_{X, k}$, we have $\{F_1,F_2,\ldots,F_{r+1}\}\subseteq \mathcal{F}^*_{X, k}$, which completes the proof of the claim. Moreover, deleting any element in $\mathcal{F}^*_{X, k}$, say $F$, strictly decreases the number of $ (r+1,t)$-simplices (there exists $\{F,F_2,F_3\ldots,F_{r+1}\}\subseteq \mathcal{F}^*_{X, k}$ satisfying the condition in \textbf{Step 4} of Lemma \ref{n r+1,t} which implies $F$ is in some simplices in $\mathcal{F}^*_{X, k}$). Thus, $\mathcal{F}^*_{X, k}\subseteq \mathcal{F}\subseteq \mathcal{F}_{X, k}$, which completes the proof of Theorem \ref{th} and Corollary \ref{cor}.
\section*{Acknowledgement}
J. Song is supported by the Beijing Natural Science Foundation's Undergraduate Initiating Research Program (QY24222). M. Cao is supported by the National Natural Science Foundation of China (12301431),the Fundamental Research Funds for the Central Universities, and the Research Funds of Renmin University of China (24XNYJ11). M. Lu is supported by the National Natural Science Foundation of China (12171272).

\vskip.2cm

\vskip.2cm


\end{document}